    \DeclareRobustCommand{\qed}{%
      \ifmmode 
      \else \leavevmode\unskip\penalty9999 \hbox{}\nobreak\hfill
      \fi
      \quad\hbox{\qedsymbol}}
    \newcommand{\openbox}{\leavevmode
      \hbox to.77778em{%
      \hfil\vrule
      \vbox to.675em{\hrule width.6em\vfil\hrule}%
      \vrule\hfil}}
    \newcommand{\qedsymbol}{\openbox}
    \newenvironment{proof}[1][\proofname]{\par
      \normalfont
      \topsep6\p@\@plus6\p@ \trivlist
      \item[\hskip\labelsep\itshape
        #1.]\ignorespaces
    }{%
      \qed\endtrivlist
    }
    \newcommand{\proofname}{Proof}
    \pgfplotsset{compat=1.18}
\newtheorem{lemma}{Lemma}
\newtheorem{theorem}{Theoerm}
\newtheorem{corollary}{Corollary}
\newtheorem{proposition}{Proposition}
\newtheorem{definition}{Definition}
\newtheorem{remark}{Remark}
\DeclareMathOperator{\vspan}{span}
\DeclareMathOperator{\diag}{\mathrm{diag}}
\begin{document}

\title{Dynamic Mode Decomposition with Control Liouville Operators}

\author{Joel A. Rosenfeld and Rushikesh Kamalapurkar
	\thanks{This research was supported in part by the Air Force Office of Scientific Research under award numbers FA9550-20-1-0127 and FA9550-21-1-0134, and the National Science Foundation (NSF) under award numbers 2027976 and 2027999. Any opinions, findings, or conclusions in this paper are those of the author(s) and do not necessarily reflect the views of the sponsoring agencies.}%
	\thanks{Joel A. Rosenfeld is with the Department of Mathematics and Statistics, University of South Florida, Tampa, FL 33620 USA (e-mail: rosenfeldj@usf.edu).}%
	\thanks{Rushikesh Kamalapurkar is with the School of Mechanical and Aerospace Engineering, Oklahoma State University, Stillwater, OK 74078 USA (e-mail: rushikesh.kamalapurkar@okstate.edu)}%
}

\maketitle

\begin{abstract}
    This paper builds the theoretical foundations for dynamic mode decomposition (DMD) of control-affine dynamical systems by leveraging the theory of vector-valued reproducing kernel Hilbert spaces (RKHSs). Specifically, control Liouville operators and control occupation kernels are introduced to separate the drift dynamics from the input dynamics. A given feedback controller is represented through a multiplication operator and a composition of the control Liouville operator and the multiplication operator is used to express the nonlinear closed-loop system as a linear total derivative operator on RKHSs. A spectral decomposition of a finite-rank representation of the total derivative operator yields a DMD of the closed-loop system. The DMD generates a model that can be used to predict the trajectories of the closed-loop system. For a large class of systems, the total derivative operator is shown to be compact provided the domain and the range RKHSs are selected appropriately. The sequence of models, resulting from increasing-rank finite-rank representations of the compact total derivative operator, are shown to converge to the true system dynamics, provided sufficiently rich data are available. Numerical experiments are included to demonstrate the efficacy of the developed technique.
\end{abstract}

\begin{IEEEkeywords}
    dynamic mode decomposition, NL system identification, Computational methods, Reduced order modeling, Nonlinear systems
\end{IEEEkeywords}

\section{Introduction}
\IEEEPARstart{S}{pectral} methods for identification of nonlinear systems utilize representations of unknown, finite-dimensional nonlinear dynamics, in discrete or continuous time, as linear operators over infinite dimensional spaces (cf. \cite{SCC.Mezic2005}). In the discrete-time case, this linear operator is a composition operator called the Koopman operator \cite{SCC.Koopman1931}. In the continuous time case, it is a total derivative operator called the Liouville operator \cite{SCC.Rosenfeld.Kamalapurkar.ea2022} (or the Koopman generator, in special cases where it can be obtained as the limit of a sequence of Koopman operators with decreasing sample times \cite[Section 7.5]{SCC.Lasota.Mackey1994}). In dynamic mode decomposition (DMD), trajectories of a dynamical system are used to construct a finite-rank representation of the aforementioned linear operator \cite{SCC.Kutz.Brunton.ea2016}. The finite-rank representation is then diagonalized and the resultant eigenfunction and eigenvalues are used to provide a representation of the identity function. This representation provides the dynamic modes of the system as vector-valued coefficients attached to the eigenfunctions. Thereafter, a state trajectory can be predicted as a sum of exponential functions multiplied by the dynamic modes (cf. \cite{SCC.Williams.Rowley.ea2015,SCC.Kutz.Brunton.ea2016,SCC.Rosenfeld.Kamalapurkar.ea2022}). 

The primary application area of Koopman spectral analysis of dynamical systems has been fluid dynamics, where DMD is compared with proper orthogonal decomposition (POD) for nonlinear fluid equations (cf. \cite{SCC.Mezic2013}). DMD has also been employed in the study of stability properties of dynamical systems \cite{SCC.Vaidya.Mehta.ea2010,SCC.Mauroy.Mezic2016}, neuroscience \cite{SCC.Brunton.Johnson.ea2016}, financial trading \cite{SCC.Mann.Kutz2016}, feedback stabilization \cite{SCC.Huang.Ma.ea2018}, optimal control \cite{SCC.Sootla.Mauroy.ea2018}, modeling of dynamical systems \cite{SCC.Proctor.Brunton.ea2016,SCC.Quade.Abel.ea2018,SCC.Sinha.Huang.ea2019}, and model-predictive control \cite{SCC.Arbabi.Korda.ea2018}. For a generalized treatment of DMD as a Markov model, see \cite{SCC.Jayaraman.Lu.ea2019}.

Extensions of the idea of Koopman operator-based DMD to systems with control can be loosely categorized in three categories: spectral analysis of the drift (zero-input) dynamics \cite{SCC.Surana2016}, input-parameterized Koopman operators \cite{SCC.Proctor.Brunton.ea2018}, and reformulation as an autonomous state-control dynamical system \cite{SCC.Korda.Mezic2018a}. These methods rely on discretization of continuous-time systems, either for computation (when Koopman operators are used), or for analysis (when Koopman generators are used), and as such, are only applicable to systems that admit a globally well-defined discretization (i.e., systems that cannot escape to infinity in finite time starting from any initial condition). When dealing with Koopman generators, the data required for a spectral decomposition typically include the time derivative of the state, which is not generally available. Recently, inspired by the notion of occupation measures \cite{SCC.Lasserre.Henrion.ea2008} defined on Banach spaces of continuous functions, the authors in \cite{SCC.Rosenfeld.Russo.eatoappear} defined analogous objects on reproducing kernel Hilbert spaces (RKHSs). The so-called occupation kernels, when combined with operators such as the Liouville operator, provide a method for spectral analysis of continuous-time systems directly, without the need for discretization.

The paradigm shift afforded by occupation kernels arises through the consideration of the state trajectory as the fundamental unit of data \cite{SCC.Rosenfeld.Russo.eatoappear}. This paper, along with the preliminary results reported in \cite{SCC.Rosenfeld.Kamalapurkar2021}, build on the foundations developed in \cite{SCC.Rosenfeld.Russo.eatoappear} to address DMD of control-affine dynamical systems. To address systems with control, the occupation kernels are augmented by the control signals, resulting in the so-called \emph{control occupation kernels}, and the Liouville operator is extended to include the input dynamics, to yield the so-called \emph{control Liouville operator} \cite{SCC.Rosenfeld.Kamalapurkar2021}. The extension in \cite{SCC.Rosenfeld.Kamalapurkar2021} utilizes the theory of vector-valued RKHSs (vvRKHSs), introduced in \cite{SCC.Pedrick1957} and \cite{SCC.Schwartz1964},  and extensively studied in a machine learning context in \cite{SCC.Micchelli.Pontil2005}, \cite{SCC.Carmeli.DeVito.ea2006}, and \cite{SCC.Carmeli.DeVito.ea2010}. Multiplication operators that map between scalar-valued and vector-valued RKHSs are also utilized to define a total derivative operator that represents the dynamics of the closed-loop system controlled using a feedback controller. Using the control occupation kernels, the control Liouville operators, and the multiplication operators, a technique for discretization-free DMD of control-affine, continuous-time, nonlinear systems is developed. The developed control-Liouville DMD (CLDMD) and singular control-Liouville DMD (SCLDMD) methods yield a predictor that can predict the closed-loop behavior of a system under any given locally Lipschitz continuous feedback controller by measuring its response to different open-loop control signals.

The definitions of control occupation kernels and control Liouville operators used in this paper were first reported in the conference paper \cite{SCC.Rosenfeld.Kamalapurkar2021}. In that paper, a finite-rank representation of the closed-loop total derivative operator is indirectly derived through its adjoint. In this paper, a finite-rank representation of the closed-loop total differential operator is obtained directly, resulting in a simpler DMD algorithm. Furthermore, this paper includes a novel singular value decomposition (SVD)-based finite-rank representation of a new total derivative operator which converges in norm to the true operator with increasing rank under a compactness assumption and given sufficiently rich data. Examples of general classes of nonlinear systems where the new total derivative operators are compact are also provided to justify the compactness assumptions.

The paper is structured as follows. Section \ref{sec:Problem Formulation} formulates the prediction problem. Section \ref{sec:OperatorDMD} summarizes the overall approach. Section \ref{sec:vvRKHS} introduces the concept of vvRKHSs. Section \ref{sec:OperatorRepresentation} introduces the control occupation kernels, the control Liouville operators, and the multiplication operators needed to develop a representation of a closed-loop nonlinear system in terms of linear operators on a set of Hilbert spaces. Section \ref{sec:SVD-DMD} introduces an SVD-based approach to DMD. Section \ref{sec:eig-DMD} introduces an eigendecomposition-based approach to DMD. Section \ref{sec:computations} introduces the computational tools required to generate the finite-rank representations. Section \ref{sec:sims} presents numerical experiments to validate the developed technique. Section \ref{sec:discussion} discusses the results of the numerical experiments, and Section \ref{sec:Conclusion} concludes the paper.

\section{Problem Statement\label{sec:Problem Formulation}}
Given Carath\'{e}odory solutions $\{ \gamma_{u_i}:[0,T_i] \to \mathbb{R}^n \}_{i=1}^M$ of a nonlinear control-affine system of the form
\begin{equation}
    \dot{x} = f(x) + g(x) u_i(t), \quad x(0) = \gamma_{u_i}(0) \label{eq:openloop_dynamics}
\end{equation}
under Lebesgue measurable, bounded control inputs $\{u_i : [0,T_i] \to \mathbb{R}^m\}_{i=1}^M$, the objective of this paper is to provide an operator theoretic approach for the analysis of the \emph{closed loop} system
\begin{equation}
    \dot{x} = f(x) + g(x)\mu(x) \eqqcolon F_\mu (x),\label{eq:control-affine}
\end{equation}
where $x\in\mathbb{R}^n$ is the state, $\mu : \mathbb{R}^n \to \mathbb{R}^m$ is a locally Lipschitz continuous feedback controller, $f:\mathbb{R}^n \to \mathbb{R}^n$ and $g: \mathbb{R}^n \to \mathbb{R}^{n\times m}$ are locally Lipschitz continuous functions corresponding to the drift dynamics and the control effectiveness matrix, respectively, and $\dot{x}$ denotes the time derivative of $x$. The observed control trajectories and control inputs will allow for the construction of a finite-rank representation of the so-called \emph{control Liouville operator}, which is a generalization of the Liouville operator introduced in \cite{SCC.Rosenfeld.Kamalapurkar.ea2022}.

Similar to the robot manipulator examples in \cite{SCC.Kamalapurkar.Walters.ea2018} most Euler-Lagrange systems with invertible inertia matrices can be expressed in the control-affine form. The Euler-Lagrange equations are used to describe a large class of physical systems (cf. \cite{SCC.Goldstein.Poole.ea2002}), and as such, various methods for control and identification of nonlinear systems in the Euler-Lagrange form have been studied in detail over the years (see, e.g., \cite{SCC.Ortega.Loria.ea1998,SCC.Morabito.Teel.ea2004,SCC.Feng.Hu.ea2018}).  Since most physical systems of practical importance such as robot manipulators \cite{SCC.Behal.Dixon.ea2009} and ground, air, and maritime vehicles and vessels \cite{SCC.Kamalapurkar.Walters.ea2018} have inertia matrices that are invertible over large operating regions, control-affine models encompass a large class of physical systems.

\section{Operators and Dynamic Mode Decomposition\label{sec:OperatorDMD}}
In this section, the general idea behind the developed operator-theoretic DMD approach is introduced. The approach relies on representation of a closed loop dynamical system as an operator that maps between suitable function spaces. For the motivational discussion in this section, assume that given functions $f$, $g$, and $\mu$, and RKHSs $\tilde{H}_d$ and $\tilde{H}_r $ defined on a compact set $X\subset\mathbb{R}^n$, there exist a set, $\mathcal{D}\left(A_{F_{\mu}}\right)\subset \tilde{H}_d$ and a \emph{total derivative operator} $A_{F_{\mu}}:\mathcal{D}\left(A_{F_{\mu}}\right)\to\tilde{H}_r$ such that
\begin{enumerate}[label={(R\arabic*)},leftmargin=*]
    \item for all $h \in \mathcal{D}\left(A_{F_{\mu}}\right)$, $A_{F_{\mu}}h\coloneqq\frac{\partial h}{\partial x}  F_{\mu} \in \tilde{H}_r$, where $\frac{\partial h}{\partial x}$ is a row vector, and\label{req:R1}
    \item $ h_{\mathrm{id},j} \in \mathcal{D}\left(A_{F_{\mu}}\right) $ for all $j=1,\cdots,n$, where $h_{\mathrm{id}} = \begin{bmatrix} h_{\mathrm{id},1},&\cdots&,h_{\mathrm{id},n} \end{bmatrix}^{\top}$ is the identity function, with components defined as $h_{\mathrm{id},j}(x) = x_j$ for all $x \in X$.\label{req:R2}
\end{enumerate}
Note that the total derivative operator $A_{F_{\mu}}$ above is the Liouville operator (or the Koopman generator) with symbol $F_{\mu} = f+g\mu$ as defined in \cite{SCC.Rosenfeld.Kamalapurkar.ea2022}. As such, a DMD of the closed loop system could be obtained using the methods presented in \cite{SCC.Rosenfeld.Kamalapurkar.ea2022} \emph{provided data generated by the closed loop system $\dot{x} = F_{\mu}(x)$ is available}. The objective in this paper is to develop a model of the system using a feedback-agnostic data set. That is, given any feedback controller $\mu:\mathbb{R}^n\to\mathbb{R}^m$ and a data set recorded by exciting the open-loop system $\dot{x} = f(x) + g(x)u$ using control signals $u = u_i:[0,T_i]\to\mathbb{R}^n$, $i=1,\ldots,M$, we aim to build a predictive model of the closed loop system $\dot{x} = F_{\mu}(x)$.

\subsection{The Eigendecomposition Approach}
If $\tilde{H}_d = \tilde{H}_r $, $\phi$ is an eigenfunction of $A_{F_{\mu}}$ with eigenvalue $\lambda$, and $\gamma_{\mu}$ is a controlled trajectory arising from \eqref{eq:control-affine}, then it follows that
{\medmuskip=0mu \thinmuskip=0mu \thickmuskip=0mu
\begin{multline*}
    \frac{\mathrm{d} \left(\phi\left(\gamma_{\mu}\left(t\right)\right)\right)}{\mathrm{d}t}  = \frac{\partial \phi}{\partial x} \left(\gamma_{\mu}\left(t\right)\right) \Big(f\left(\gamma_{\mu}\left(t\right)\right) + g\left(\gamma_{\mu}\left(t\right)\right) \mu\left(\gamma_{\mu}\left(t\right)\right)\Big)\\
    = \left[A_{F_{\mu}} \phi\right]\left(\gamma_{\mu}\left(t\right)\right) = \lambda \phi\left(\gamma_{\mu}\left(t\right)\right).
\end{multline*}}
Hence, $\phi(\gamma_{\mu}(t)) = \mathrm{e}^{\lambda t} \phi(\gamma_{\mu}(0)).$

If the the span of the eigenfunctions $\{\phi_i\}_{i=1}^{\infty}$ of $A_{F_{\mu}}$ is dense in $\tilde{H}_d$, then the identity function can be decomposed using the eigenfunctions as $h_{\mathrm{id}}(x) = \lim_{M\to\infty}\sum_{i=1}^M \xi_{i,M} \phi_i(x)$, where $\xi_{i,M} \in \mathbb{C}^n$ are the \emph{dynamic modes} of the closed loop system. Moreover, it follows that
\begin{equation}
    \gamma_{\mu}(t) = h_{\mathrm{id}}(\gamma_\mu(t)) = \lim_{M\to\infty}\sum_{i=1}^M  \xi_{i,M} \phi_i(\gamma_\mu(0)) \mathrm{e}^{\lambda_i t},\label{eq:infinite_spectral_reconstruction}
\end{equation}
where $\lambda_i$ denotes the eigenvalue corresponding to the eigenfunction $\phi_i$, and the coefficients $\xi_{i,M}$ depend on $M$ because the eigenfunctions are not generally orthogonal.

If the eigenfunctions, the eigenvalues, and the modes could be computed from data, then a finite truncation of \eqref{eq:infinite_spectral_reconstruction} could be used as a predictive model. However, since the operator $A_{F_{\mu}}$ cannot generally be expected to be bounded, even the existence of eigenfunctions cannot be guaranteed.

The idea in DMD is to construct a finite rank (say rank $M$) approximation (say $\hat A_{F_{\mu},M}$) of $A_{F_{\mu}}$. Then, the eigenfunctions $\{\hat\phi_{i,M}\}_{i=1}^{M}$, the eigenvalues $\{\hat\lambda_{i,M}\}_{i=1}^{M}$, and the modes $\{\hat\xi_{i,M}\}_{i=1}^{M}$ of $\hat A_{F_{\mu},M}$ are computed and used as proxies in a finite truncation of \eqref{eq:infinite_spectral_reconstruction} to generate a predictive model.

If the operators $\hat A_{F_{\mu},M}$ can be shown to converge to $A_{F_{\mu}}$ \emph{in the norm topology}, then given any $\epsilon > 0$, there exists $M$ such that for all $i=1,\ldots,M$, the pairs $(\hat\phi_{i,M},\hat\lambda_{i,M})$ are approximate eigenpairs for the true operator $A_{F_{\mu}}$. That is, for all $i=1,\ldots,M$ and for all $x\in X$, $\left\vert \left[A_{F_{\mu}} \hat\phi_{i,M}\right](x) - \hat\lambda_{i,M} \hat\phi_{i,M}(x) \right\vert < \epsilon $. The approximate eigenpairs can then be used to obtain a model that, given rich enough data and a large enough $M$, can accurately predict the system trajectories in $X$ over a finite horizon.

While requirements \ref{req:R1} and \ref{req:R2} above, compactness of the Liouville operator, and density of the eigenfunctions in $\tilde{H}_d$ are difficult to guarantee in general, empirical evidence suggests that the eigenfunctions $\hat\phi_{i,M}$ are expressive enough to approximate $h_{\mathrm{id},i}$ in a variety of applications \cite{SCC.Rosenfeld.Kamalapurkar.ea2022}. Since $\hat\phi_{i,M}$ are computed as linear combinations of reproducing kernels or occupation kernels, the empirical evidence could be explained by the postulate that the approximate eigenfunctions inherit universality properties of the reproducing kernels and the occupation kernels \cite{SCC.Rosenfeld.Russo.eatoappear}. A theoretical examination of the expressiveness of the approximate eigenfunctions for a specific operators, Hilbert spaces, and data set is out of the scope of this article. 

Convergence of the finite-rank representation to the true operator in the norm topology is also typically impossible to guarantee in the eigendecomposition-based DMD framework \cite{SCC.Korda.Mezic2018a, SCC.Rosenfeld.Kamalapurkar.ea2022}. As such, similar to most DMD techniques, the eigendecomposition approach, while well-motivated by the theory presented in this paper, is a heuristic technique. On the other hand, as shown in \cite{SCC.Rosenfeld.Kamalapurkar2023a}, obtaining norm convergence of finite rank representations to the true Liouville operator is possible in an SVD-based framework. 

\subsection{The Singular Value Decomposition Approach}
In the SVD-based framework, two different RKHSs $\tilde{H}_d$ and $\tilde{H}_r$ are selected as the domain and the co-domain of $A_{F_{\mu}}$, respectively. If the domain and the range RKHSs are selected carefully, then for a large class of nonlinear systems, the operator $A_{F_{\mu}}$ can be shown to be compact. Compactness trivially ensures satisfaction of Requirement \ref{req:R1} above. Requirement \ref{req:R2} can be met by proper selection of $\tilde{H}_d$ (see Section \ref{sec:SVD-DMD}). Compactness also allows for the construction of the needed sequence $\hat{A}_{F_{\mu},M}$ that converges to $A_{F_{\mu}}$ in the norm topology. The left and right singular functions of $\hat{A}_{F_{\mu},M}$ can then be used to generate a sequence of system models that converges to the true system model.

In particular, the closed-loop model $\dot{x} = f(x) + g(x) \mu(x)$ can be expressed in terms of the total derivative operator as
\begin{equation}\label{eq:total_derivative_model}
    \dot{x} = \frac{\partial h_{\mathrm{id}}}{\partial x}(x)\begin{bmatrix}
        f(x) & g(x)
    \end{bmatrix}\begin{bmatrix}1\\\mu(x)\end{bmatrix} = [A_{F_{\mu}} h_{\mathrm{id}}] (x),
\end{equation} 
where the notation $ A_{F_{\mu}} h_{\mathrm{id}}$ is used to denote the operator $A_{F_{\mu}}$ acting on every row of the vector-valued function $h_{\mathrm{id}}$. If $A_{F_{\mu}}:\tilde{H}_d\to\tilde{H}_r$ is a compact operator, then there exist singular values $ \left\{\sigma_i\right\}_{i=1}^\infty\subset \mathbb{R}$, left singular functions $\left\{\phi_i\right\}_{i=1}^\infty\subset{\tilde{H}_d}$, and right singular functions $\left\{\psi_i\right\}_{i=1}^\infty\subset \tilde{H}_r$ such that
\begin{equation}\label{eq:infinite_spectral_reconstruction_svd}
    \dot{x} = \sum_{i=1}^\infty \sigma_i \left\langle h_{\mathrm{id}},\phi_i\right\rangle_{\tilde{H}_{d}} \psi_i(x),
\end{equation}
where the notation $\left\langle h_{\mathrm{id}},\phi_i\right\rangle_{\tilde{H}_{d}}$ is used to denote the $n-$vector $\begin{bmatrix}\left\langle h_{\mathrm{id},1},\phi_i\right\rangle_{\tilde{H}_{d}},&\ldots,&\left\langle h_{\mathrm{id},n},\phi_i\right\rangle_{\tilde{H}_{d}}\end{bmatrix}^\top$. The idea in singular DMD is to use the SVD of $\hat A_{F_{\mu},M}$ as a proxy in a finite truncation of \eqref{eq:infinite_spectral_reconstruction_svd} to construct a predictive model.

\subsection{Related Work}
Operator-based DMD methods for systems with control can be loosely categorized in three categories: spectral analysis of the drift (zero-input) dynamics \cite{SCC.Surana2016}, input-parameterized Koopman operators \cite{SCC.Proctor.Brunton.ea2018}, and reformulation as an autonomous state-control dynamical system \cite{SCC.Korda.Mezic2018a}.

If data can be collected for the system with zero inputs, or if the system is affine in control, then techniques such as dynamic mode decomposition with control (DMDc) \cite{SCC.Proctor.Brunton.ea2016}, sparse nonlinear system identification with control (SINDYc) \cite{SCC.Brunton.Proctor.ea2016}, extended dynamic mode decomposition with control (EDMDc) \cite{SCC.Korda.Mezic2018a}, bilinearization \cite{SCC.Goswami.Paley2022}, etc., can be utilized to estimate eigenvalues and eigenfunctions of the Koopman operator, or the Koopman generator, of the drift (zero-input) dynamics. In the case of control-affine systems, the eigenvalues and eigenfunctions can also be utilized to solve a wide variety of problems including, but not limited to, reachability \cite{SCC.Goswami.Paley2022}, optimal control \cite{SCC.Kaiser.Kutz.ea2021}, model-based predictive control, \cite{SCC.Folkestad.Pastor.ea2020}, and observer synthesis \cite{SCC.Surana2016}.     

A different approach to operator theoretic analysis of systems with control is via input-parameterized Koopman operators \cite{SCC.Proctor.Brunton.ea2018}. The central idea in this family of methods is that if the input is constant, then the dynamical system is autonomous, and as such, admits a Koopman operator. Given a set of possible input levels, an input-parameterized family of Koopman operators (or generators) can thus be constructed \cite{SCC.Kaiser.Kutz.ea2021}. This observation is particularly useful when utilized for spectral analysis of control-affine systems, where Koopman generators are themselves affine in control. The state of the system can thus be predicted using a linear combination of a finite number of input-parameterized Koopman generators \cite{SCC.Peitz.Otto.ea2020}. In addition to motivating DMDc and EDMDc, input-parameterized Koopman generators can also be used for various control and estimation tasks \cite{SCC.Otto.Rowley2021}.

Systems with control can also be analyzed by studying operators that operate on a more general set of observables. Instead of observables that are functions of the state in the typical Koopman framework, the observables here are functions of the state and the control \cite{SCC.Korda.Mezic2018a,SCC.Proctor.Brunton.ea2018}. The methods in this category include Koopman with inputs and control (KIC) \cite{SCC.Proctor.Brunton.ea2018} and linear and bilinear predictors \cite{SCC.Korda.Mezic2018a}. The KIC approach is cogent if the control signal itself is produced by a dynamical system, and leads to useful heuristics when it is not. In \cite{SCC.Korda.Mezic2018a}, the shift operator is used as the dynamics of the control signal to develop a Koopman operator that operates on observables defined on an infinite-dimensional state space that includes the space of all possible control \emph{sequences}. Spectral analysis of this operator with carefully selected observables yields linear and bilinear predictors for the underlying nonlinear system. Applications of this approach include model-based predictive control \cite{SCC.Korda.Mezic2018a}, robust model-based predictive control \cite{SCC.Zhang.Pan.ea2022}, and system identification \cite{SCC.Mauroy.Goncalves2019}.

In this paper, the operator $A_{F_{\mu}}$ is constructed as a composition of two operators, a differential operator that maps from $\tilde{H}_d$ into a vvRKHS and a multiplication operator that maps from the vvRKHS either back into $\tilde{H}_{d}$ (the eigendecomposition approach) or into $\tilde{H}_r$ (the SVD approach) (see Fig. \ref{fig:operator-diagram}).
\section{Vector-valued Reproducing Kernel Hilbert Spaces\label{sec:vvRKHS}}
In this section, properties of vvRKHSs relevant to topic under consideration are reviewed. The review relies heavily on the discussion given in \cite{SCC.Carmeli.DeVito.ea2010}.
\begin{definition}
    Let $\mathcal{Y}$ be a Hilbert space, and let $H$ be a Hilbert space of functions from a set $X$ to $\mathcal{Y}$. The Hilbert space $H$ is a \emph{vvRKHS} if for every $v \in \mathcal{Y}$ and $x \in X$, the functional $f \mapsto \langle f(x), v \rangle_{\mathcal{Y}}$ is bounded.
\end{definition}
A vvRKHS is a direct generalization of a ``scalar-valued'' RKHS, since for a fixed $v \in \mathcal{Y}$, the collection of functions $\{ g(x) = \langle f(x), v \rangle_{\mathcal{Y}} : f \in H \}$ forms an RKHS of scalar-valued functions. 

By the Riesz representation theorem, for each $x \in X$ and $v \in \mathcal{Y}$, there exists a unique function $K_{x,v} \in H$ such that $\langle f, K_{x,v} \rangle_H = \langle f(x), v \rangle_{\mathcal{Y}}$ for all $f \in H$. The fact that the mapping $v \mapsto K_{x,v}$ is linear over $\mathcal{Y}$ yields a linear operator $ K_x:\mathcal{Y}\to H $, defined as $ K_{x} \coloneqq v\mapsto K_{x,v} $, called \emph{the kernel centered at $x$, associated with $H$.} The operator $ K: X \times X \to \mathcal{L}(\mathcal{Y},\mathcal{Y}) $, defined as $ K(x,y) := K_x^*K_y $, where $K_x^*:H\to\mathcal{Y} $ is the adjoint of $K_x$ and $\mathcal{L}(\mathcal{Y},\mathcal{Y})$ is the space of linear operators from $\mathcal{Y}$ to $\mathcal{Y}$, is called the \emph{reproducing kernel of $ H $.} For any $f\in H$, $x\in X$, and $v\in\mathcal{Y}$, we have $\langle K_x^*f ,v\rangle_{\mathcal{Y}} = \langle f ,K_x v\rangle_{H} = \langle f(x) ,v\rangle_{\mathcal{Y}} $, and as a result, the reproducing property $K_x^* f = f(x)$. With $f = K_y v$, we see that for all $v\in \mathcal{Y}$, $ [K_y v](x) = K_x^* K_y v = K(x,y)v $.

In the particular case that $\mathcal{Y} = \mathbb{R}^n$, $K(x,y)$ is a real-valued $n \times n$ matrix for fixed $x,y \in X$. As a result one can construct several examples of vector-valued kernels. Indeed, given a scalar-valued RKHS $\tilde{H}$ over $X$, with the corresponding reproducing kernel $\tilde{K}:X\times X\to\mathbb{R}$, and a positive definite matrix, $A \in \mathbb{R}^{n\times n}$, the operator $ (x,y) \mapsto A \tilde{K}(x,y) $ that maps from $X\times X$ to $\mathcal{L}\left(\mathbb{R}^n,\mathbb{R}^n\right)$ is a reproducing kernel of a vvRKHS.

Similar to scalar-valued kernels, it can be shown that the span of vector-valued kernels is dense in $H$.
\begin{proposition}\label{prop:kernelDensity}
    The span of the set  $ E := \{ K_{x,v} : v \in \mathcal{Y} \text{ and } x \in X \}$, is dense in $H$.
\end{proposition}
\begin{proof}
    Suppose that $h \in E^{\perp}$, then given a fixed $x \in X$, $\langle h, K_{x,v} \rangle_{H} = \langle h(x), v \rangle_{\mathcal{Y}} = 0$ for all $v \in \mathcal{Y}$. Hence, $h(x) = 0 \in \mathcal{Y}$. Since $x$ was arbitrarily selected, $h \equiv 0 \in H$. Thus, $E^{\perp} = \{ 0 \}$ and $\overline{\vspan(E)} = (E^{\perp})^\perp = H$.
\end{proof}
As a consequence of Proposition \ref{prop:kernelDensity}, given $\epsilon > 0$ and $h \in H$, there is a finite linear combination of vector-valued kernels that approximate $h$ with an error smaller than $\epsilon$ in the Hilbert space norm.

In the following development, unless otherwise specified, it is assumed that $X\subset\mathbb{R}^n$ is compact, the Hilbert space $\mathcal{Y}$ is selected to be $\mathbb{C}^{1\times (m+1)}$ with the usual definitions of vector norms and inner products, $\tilde{H}_d$ and $\tilde{H}_r$ are RKHSs of continuously differentiable functions from $X$ to $\mathbb{C}$, and $H$ is a vvRKHS of continuous functions from $X$ to $\mathbb{C}^{1\times (m+1)}$. The reproducing kernel of $H$ is denoted by $K:X \times X \to \mathcal{L}(\mathbb{R}^{1\times (m+1)},\mathbb{R}^{1\times (m+1)})$ and the reproducing kernels of $\tilde{H}_d$ and $\tilde{H}_r$ are denoted by $\tilde{K}_d:X \times X \to\mathbb{R}$ and $\tilde{K}_r:X \times X \to\mathbb{R}$, respectively. When the domain and the range RKHSs are identical, the subscripts $d$ and $r$ are omitted. The Hilbert space  $\mathcal{Y}$ is selected to be a space of row vectors to accommodate the row vector convention for partial derivatives. As such, the linear operation of $K_{x}$ on $v \in \mathcal{Y}$ is expressed as $K_{x,v} = vK_{x}$.

\section{Closed Loop Nonlinear Systems as Operators over RKHSs\label{sec:OperatorRepresentation}}

To solve the problem as stated in Section \ref{sec:Problem Formulation} in a vvRKHS framework, the closed-loop nonlinear system is expressed in terms of operators over two RKHSs and a vvRKHS. A majority of the definitions and propositions in this section were first introduced in \cite{SCC.Rosenfeld.Kamalapurkar2021}. The definitions are included here for completeness and the proofs of most of the propositions are more detailed than the corresponding proofs in \cite{SCC.Rosenfeld.Kamalapurkar2021}.
\subsection{Control Liouville Operators and Multiplication Operators \label{subsec:controlLiouvilleOperators}}
Representation of a controlled system in terms of operators can be realized using the so-called control Liouville Operator.
\begin{definition}
    Let $f:\mathbb{R}^n \to \mathbb{R}^n$ and $g: \mathbb{R}^n \to \mathbb{R}^{n\times m}$ be locally Lipschitz continuous functions and the set 
    \begin{equation*}
        \mathcal{D}(A_{f,g}) \coloneqq \{ h \in \tilde{H}_d : x\mapsto\frac{\partial h}{\partial x} (x) \begin{bmatrix} f(x) & g(x) \end{bmatrix} \in H \}
    \end{equation*}
    be the domain of the operator, $A_{f,g} : \mathcal{D}(A_{f,g}) \to H$, given as
    \begin{equation*}
        \left[A_{f,g} h\right](x) \coloneqq \frac{\partial h}{\partial x} (x) \begin{bmatrix} f(x) & g(x) \end{bmatrix}.
    \end{equation*}
    The operator $A_{f,g}$ is called \emph{the control Liouville operator corresponding to $f$ and $g$ over $H$}.
\end{definition}
Control Liouville operators are a direct generalization of the more traditional Liouville operators, where the drift dynamics and control effectiveness components of the dynamics are separated on the operator theoretic level. Vector-valued RKHSs arise naturally in this context, where the partial derivative of $h \in \mathcal{D}(A_{f,g})$ with respect to $x$ is a row vector of dimension $n$, and through a dot product with $f$ and multiplication by the matrix $g$, the result of the operation of $A_{f,g}$ on $h$ is a row vector with dimension $m+1$.

The control Liouville operator does not depend on the control input, and as such, is not sufficient by itself for prediction of system behavior. An additional operator is thus required to complete the construction of the operator $A_{F_{\mu}}$ alluded to in Section \ref{sec:OperatorDMD}. The controller is incorporated in the developed framework via a multiplication operator. The inclusion of this multiplication operator, in addition to the newly defined control Liouville operator, sets the theoretical foundations of DMD of controlled systems apart from the uncontrolled case studied in \cite{SCC.Rosenfeld.Kamalapurkar.ea2022} and \cite{SCC.Rosenfeld.Kamalapurkar2023a}.
\begin{definition}
    For a continuous function $\nu:X\to\mathcal{Y}$, the \emph{multiplication operator with symbol $\nu$}, denoted by $M_{\nu} : \mathcal{D}(M_{\nu}) \to \tilde{H}_r$, is defined as
    \begin{equation*}
       \left[M_{\nu} h\right](\cdot) \coloneqq \langle h(\cdot), \nu(\cdot) \rangle_{\mathcal{Y}},
    \end{equation*}
    where $\mathcal{D}(M_{\nu}) := \{ h \in H : x\mapsto\langle h(x), \nu(x) \rangle_{\mathcal{Y}} \in \tilde{H}_r \}$.
\end{definition}

Given the continuous function $\overline{\mu}:\mathbb{R}^n\to\mathbb{R}^{1\times m+1}$ derived from a feedback controller $\mu : \mathbb{R}^n \to \mathbb{R}^m$ as $\overline{\mu}(x) \coloneqq \begin{bmatrix} 1 & \mu(x)^{\top} \end{bmatrix}$, the corresponding multiplication operator $M_{\overline{\mu}} : \mathcal{D}(M_{\overline{\mu}}) \to \tilde{H}_{r}$ is given as $M_{\overline{\mu}} h = x\mapsto h(x) \begin{bmatrix} 1 & \mu(x)^{\top} \end{bmatrix}^{\top} $ and $\mathcal{D}(M_{\overline{\mu}}) = \{ h \in H :x\mapsto h(x) \begin{bmatrix} 1 & \mu(x)^{\top} \end{bmatrix}^{\top} \in \tilde{H}_r \}.$
The operator $M_{\overline{\mu}} A_{f,g}$ maps from $\mathcal{D}(A_{f,g}) \subset \tilde{H}_d$ to $\tilde{H}_r$, and plays the role of the operator $A_{F_{\mu}}$ described in Section \ref{sec:OperatorDMD}. In the above construction, it is assumed that the image of $ A_{f,g} $ falls within the domain of $ M_{\overline{\mu}} $. This assumption is not easy to verify in general, but it is trivially met in the example presented in Section \ref{sec:SVD-DMD}, where the RKHSs are Bargmann-Fock spaces restricted to the set of real numbers. The control Liouville operator will be assumed to be compact in Section \ref{sec:SVD-DMD} and densely defined in Section \ref{sec:eig-DMD}. Further comments on the density and the compactness assumptions, including examples of systems and RKHSs for which these assumptions are met, are provided in the respective sections.
\begin{figure}
    \centering
\begin{tikzpicture}[>= latex]
    \node[minimum size=1.25cm, circle, fill=lightgray, circle,] (H) {$H$};
    \node[minimum size=1.25cm, fill=lightgray, circle, above left=1.5cm and 3cm of H.center, anchor=center] (Hd) {$\tilde{H}_d$};
    \node[minimum size=1.25cm, fill=lightgray, circle, below left=1.5cm and 3cm of H.center, anchor=center] (Hr) {$\tilde{H}_r$};
    \node[minimum size=0.75cm, draw, above right=1.5cm and 1cm of Hr.center, anchor=center] (X) {$X$};
    \node[minimum size=0.75cm, draw, above left=1.5cm and 1cm of Hr.center, anchor=center] (R) {$\mathbb{R}$};
    \node[minimum size=0.75cm, draw, right=2cm of H.center, anchor=center] (Y) {$\mathcal{Y}$};
    \draw [->,dashed,thick] plot [smooth] coordinates {(X.north) ($(Hd.center) - (0,0.25)$) (R.north)};
    \draw [->,dashed,thick] plot [smooth] coordinates {(X.south) ($(Hr.center) + (0,0.25)$) (R.south)};
    \draw [->,dashed,thick] plot [smooth] coordinates {(X.east) ($(H.center) - (0,0.25)$) (Y.west)};
    \draw[->,very thick] (Hd) -- node[pos=0.35,xshift=0.25cm,fill=white](Afg){$ A_{f,g} $} (H) ;
    \node[above right=0.5cm and 0.75cm of Afg.center,dashed,draw](Afgh){\footnotesize $ A_{f,g}h=\frac{\partial h}{\partial x}\begin{bmatrix}f&g\end{bmatrix} $};
    \draw[-](Afg) -- (Afgh.west);
    \draw[->,very thick] (H) -- node[pos=0.6,xshift=0.25cm,fill=white](Mu){$ M_{\overline{\mu}} $} (Hr) ;
    \node[below right=0.25cm and 0.5cm of Mu.center,dashed,draw](MuAfgh){\footnotesize $M_{\overline{\mu}}A_{f,g}h=\frac{\partial h}{\partial x}\begin{bmatrix}f&g\end{bmatrix}\begin{bmatrix}1\\\mu\end{bmatrix}$};
    \draw[-](Mu) -- (MuAfgh.west);
\end{tikzpicture}
    \caption{A schematic diagram of the construction presented in Section \ref{sec:OperatorRepresentation}. The RKHSs are represented by filled circles. The squares at the endpoints of the dashed arrows passing through the circles indicate the domains and co-domains of the functions contained in the RKHSs. The thick arrows between the RKHSs indicate operators.}
    \label{fig:operator-diagram}
\end{figure}
\subsection{Control Occupation Kernels\label{subsec:controlOccupationKernels}}
To facilitate the computation of a finite-rank representation of $A_{F_{\mu}} = M_{\overline{\mu}} A_{f,g} $, and subsequently, the approximate eigenfunctions required for DMD, trajectories of controlled dynamical systems are embedded within vvRKHSs using the so-called \emph{control occupation kernels}. Control occupation kernels arise from a generalization of the idea of \emph{occupation kernels} introduced in \cite{SCC.Rosenfeld.Russo.eatoappear} as follows.
\begin{definition}[\!\!\cite{SCC.Rosenfeld.Russo.eatoappear}]\label{def:occKer}
 Given a continuous function $\gamma:[0,T] \to X$, an RKHS of continuous functions  $\tilde{H}$, and the bounded functional $\mathcal{T}:\tilde{H}\to\mathbb{C}$ defined as  $\mathcal{T}h = \int_0^T h(\gamma(\tau)) d\tau$  for all $h\in \tilde{H}$ the unique function $\Gamma_{\gamma}\in\tilde{H}$  that satisfies $\mathcal{T}h=\langle h, \Gamma_{\gamma}\rangle_{\tilde H}$ for all $h\in\tilde{H}$ is called the \emph{occupation kernel corresponding to $\gamma$ in $\tilde H$}.
\end{definition}

Note that the existence of a unique occupation kernel follows from the Riesz representation theorem. An extension of the definition above to systems with control results in the following notion of a control occupation kernel.
\begin{definition}[\!\!\cite{SCC.Rosenfeld.Kamalapurkar2021}]\label{def:control_occker}
Given a bounded measurable function $u: [0,T] \to \mathbb{R}^m$, a continuous function $\gamma:[0,T] \to X$, and the bounded functional $\mathcal{T} : H \to \mathbb{C}$, defined as
    \begin{equation*} 
        \mathcal{T}h \coloneqq \int_{0}^{T} h(\gamma(t))\begin{bmatrix}1 \\ u(t) \end{bmatrix} \mathrm{d}t,\quad\forall h\in H,
    \end{equation*}
the unique function $\Gamma_{\gamma,u} \in H$  that satisfies $\mathcal{T}h = \langle h, \Gamma_{\gamma,u}\rangle_H$ for all $h \in H$ is called \emph{the control occupation kernel corresponding to $u$ and $\gamma$ in $H$.}
\end{definition}
Control occupation kernels can be expressed in terms of the reproducing kernels of $H$ to facilitate computation.
\begin{proposition}[\!\!\cite{SCC.Rosenfeld.Kamalapurkar2021}]\label{prop:occ_ker_representation}
    The control occupation kernel $\Gamma_{\gamma,u} \in H$, corresponding to $u$ and $\gamma$, can be expressed as
    \begin{equation}
        \Gamma_{\gamma,u}(x) = \int_{0}^{T} \left[\begin{bmatrix} 1 & u(t)^{\top} \end{bmatrix} K_{\gamma\left(t\right)}\right]\left(x\right) \mathrm{d}t,\label{eq:occupationkernel-eval}
    \end{equation}
	and the norm of $\Gamma_{\gamma,u}$ is given as \begin{equation*}
		\| \Gamma_{\gamma,u} \|_H^2 = \int_{0}^{T} \int_{0}^{T} \begin{bmatrix} 1 & u(t)^{\top} \end{bmatrix} K(\gamma(\tau),\gamma(t)) \begin{bmatrix} 1 \\ u(\tau) \end{bmatrix} \mathrm{d}t d\tau.
	\end{equation*}
\end{proposition}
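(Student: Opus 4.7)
The plan is to identify $\Gamma_{\gamma,u}$ explicitly as a Bochner integral in $H$ and then read off both claimed formulas by applying the reproducing property. Set $v(t) := \begin{pmatrix} 1 & u(t)^{\mathrm T}\end{pmatrix} \in \mathbb{R}^{1\times (1+m)}$, so that the linear functional $T$ appearing in the definition satisfies
\[
Tp \;=\; \int_0^T \langle p(\gamma(t)), v(t)\rangle_{\mathcal Y}\, \mathrm{d}t \;=\; \int_0^T \langle p, K_{\gamma(t),v(t)}\rangle_H\, \mathrm{d}t
\]
for every $p\in H$, by the defining property $\langle p, K_{x,v}\rangle_H = \langle p(x),v\rangle_{\mathcal Y}$. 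The candidate representer is therefore $\Gamma_{\gamma,u} = \int_0^T K_{\gamma(t),v(t)}\,\mathrm{d}t$, interpreted as a Bochner integral in $H$.

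First I would check that this Bochner integral exists. Strong measurability of $t\mapsto K_{\gamma(t),v(t)}$ follows from Pettis's theorem applied to the (separable) space $H$: for each $h\in H$ the scalar function $t\mapsto \langle h,K_{\gamma(t),v(t)}\rangle_H = h(\gamma(t))\,v(t)^{\mathrm T}$ is the product of a continuous function of $t$ with a bounded measurable function, hence measurable. Norm integrability follows from the pointwise bound $\|K_{\gamma(t),v(t)}\|_H^2 = v(t)K(\gamma(t),\gamma(t))v(t)^{\mathrm T}$, which is bounded on the compact interval $[0,T]$ because the image $\gamma([0,T])$ is compact, $K$ is continuous on this compact set, and $u$ is essentially bounded. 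Once the Bochner integral is in hand, commuting the bounded linear functional $\langle p,\cdot\rangle_H$ with the integral yields $\langle p,\int_0^T K_{\gamma(t),v(t)}\,\mathrm{d}t\rangle_H = Tp$ for all $p\in H$, so uniqueness of the Riesz representer forces $\Gamma_{\gamma,u} = \int_0^T K_{\gamma(t),v(t)}\,\mathrm{d}t$.

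The pointwise formula \eqref{eq:occupationkernel-eval} is then obtained by commuting evaluation with the Bochner integral: for each fixed $x\in\mathbb{R}^n$ and each $w\in\mathcal Y$ the map $f\mapsto \langle f(x),w\rangle_{\mathcal Y} = \langle f,K_{x,w}\rangle_H$ is a bounded linear functional on $H$, so $\Gamma_{\gamma,u}(x) = \int_0^T K_{\gamma(t),v(t)}(x)\,\mathrm{d}t = \int_0^T v(t)K_{\gamma(t)}(x)\,\mathrm{d}t$, which is exactly \eqref{eq:occupationkernel-eval} in the row-vector convention of the Remark. For the norm identity, I would feed $\Gamma_{\gamma,u}$ itself into the defining identity $Tp = \langle p,\Gamma_{\gamma,u}\rangle_H$ to obtain $\|\Gamma_{\gamma,u}\|_H^2 = \int_0^T \Gamma_{\gamma,u}(\gamma(t))\begin{pmatrix}1\\u(t)\end{pmatrix}\mathrm{d}t$, then substitute the pointwise formula for $\Gamma_{\gamma,u}(\gamma(t))$ and invoke Fubini (justified by the same norm bound used for Bochner integrability) to get the advertised double integral, after recognising $v(\tau)K_{\gamma(\tau)}(\gamma(t))\,v(t)^{\mathrm T} = \begin{pmatrix}1 & u(\tau)^{\mathrm T}\end{pmatrix} K(\gamma(\tau),\gamma(t))\begin{pmatrix}1\\u(t)\end{pmatrix}$ via the identity $K(x,y) = K_y^*K_x$ and relabelling $t \leftrightarrow \tau$.

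The main technical hurdle is the Bochner integrability step, because the integrand mixes the continuous dependence on $t$ through $\gamma$ with the merely measurable dependence through $u$; the rest is essentially bookkeeping with row/column conventions and the Riesz identification. Everything else — the uniqueness of the Riesz representer, the commutation of bounded functionals with Bochner integrals, and Fubini's theorem — is standard once the integrability is secured.
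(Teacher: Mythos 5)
Your proposal is correct, and the two essential identities it rests on — the reproducing property $\langle p, K_{x,v}\rangle_H = \langle p(x),v\rangle_{\mathcal Y}$ applied to the test functions $K_{x,v}$, and the defining property $Tp = \langle p,\Gamma_{\gamma,u}\rangle_H$ — are exactly the ones the paper uses. The difference is one of formulation: the paper argues \emph{weakly}, fixing $x\in X$ and $v\in\mathcal Y$ and computing $\langle \Gamma_{\gamma,u}(x),v\rangle_{\mathcal Y} = \langle vK_x,\Gamma_{\gamma,u}\rangle_H = \int_0^T vK_x(\gamma(t))\bigl(\begin{smallmatrix}1\\u(t)\end{smallmatrix}\bigr)\,\mathrm{d}t$, so that the only integrals that ever appear are scalar or $\mathbb{R}^{m+1}$-valued and no vector-valued integration theory is needed; the pointwise formula then follows because $v$ is arbitrary. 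You instead construct $\Gamma_{\gamma,u}$ \emph{strongly} as the $H$-valued Bochner integral $\int_0^T K_{\gamma(t),v(t)}\,\mathrm{d}t$, which obliges you to verify Pettis measurability and norm integrability (and tacitly assumes $H$ is separable, or at least that the integrand is almost separably valued — not stated in the paper, but harmless here given continuity of $\gamma$ and the kernel). What your route buys is a constructive identification of the representer and a cleaner justification of the Fubini interchange in the norm computation, which the paper dispatches in one sentence; what the paper's route buys is brevity and freedom from any measurability bookkeeping beyond boundedness of the functional $T$. The norm identity is handled identically in both: substitute $p=\Gamma_{\gamma,u}$ into the defining relation and insert the pointwise formula.
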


\begin{proof}
	For $x \in X$ and $v \in \mathbb{C}^{1 \times (m+1)}$, 
	\begin{gather}
	    \langle \Gamma_{\gamma,u}(x),v \rangle_{\mathbb{C}^{1\times(m+1)}} = \langle \Gamma_{\gamma,u}, v K_{x} \rangle_{H}
	    = \int_{0}^{T} \left[v K_{x}\right](\gamma(t)) \begin{bmatrix} 1 \\ u(t) \end{bmatrix} \mathrm{d}t
	    =\int_{0}^{T} \left\langle \left[v K_{x}\right](\gamma(t)), \begin{bmatrix} 1 & u(t)^\top \end{bmatrix}\right\rangle_{\mathbb{C}^{1\times(m+1)}} \mathrm{d}t\nonumber\\
	    =\int_{0}^{T} \left\langle v K_{x}, \begin{bmatrix} 1 & u(t)^\top \end{bmatrix} K_{\gamma(t)}\right\rangle_{H} \mathrm{d}t
	    =\int_{0}^{T} \left\langle \left[\begin{bmatrix} 1 & u(t)^\top \end{bmatrix} K_{\gamma(t)}\right](x),v \right\rangle_{\mathbb{C}^{1\times(m+1)}} \mathrm{d}t\nonumber\\
	    = \left\langle \int_{0}^{T}\left[\begin{bmatrix} 1 & u(t)^\top \end{bmatrix} K_{\gamma(t)}\right](x)\mathrm{d}t,v \right\rangle_{\mathbb{C}^{1\times(m+1)}} .\label{eq:occupationkernel-eval-v}
	\end{gather}
	As \eqref{eq:occupationkernel-eval-v} holds for all $v\in \mathbb{C}^{1\times(m+1)}$, \eqref{eq:occupationkernel-eval} follows. The expression for the norm of $\Gamma_{\gamma,u}$ follows from $\| \Gamma_{\gamma,u} \|^2_H = \left\langle  \Gamma_{\gamma,u}, \Gamma_{\gamma,u}  \right\rangle_H$ and the defining properties of $\Gamma_{\gamma,u}$. 
\end{proof}

\subsection{Control Liouville Operators and Control Occupation Kernels}
There is a direct connection between the adjoints of \emph{densely defined} control Liouville operators and control occupation kernels that correspond to \emph{admissible} (see Definition \ref{def:admissible}) control signals, $u$, and their corresponding controlled trajectories, $\gamma_u$, that satisfy \eqref{eq:control-affine}. To illustrate the connection, the construction of adjoints of densely defined operators is revisited in the following.
\begin{definition}
    The \emph{domain of the adjoint of $A: \mathcal{D}(A) \to H$}, with $\mathcal{D}(A) \subseteq \tilde{H}_d$, is defined as
    \begin{equation*} 
        \mathcal{D}\left(A^{*}\right)\coloneqq\left\{h\in H\mid \phi\mapsto\left\langle A \phi,h\right\rangle_{H}\text{ is bounded on }\mathcal{D}\left(A\right)\right\} .
    \end{equation*}
    If $\mathcal{D}(A)$ is dense in $\tilde{H}_d$, then the functionals $\phi\mapsto\left\langle A \phi,h\right\rangle_{H}$ may be extended uniquely to functionals that are bounded over all of $\tilde{H}_{d}$. As a result, for each $ h \in\mathcal{D}\left(A^{*}\right) $, the Riesz representation theorem guarantees the existence of a unique function $A^{*}h\in\tilde{H}_d$ such that $\left\langle A^{*}h,\phi\right\rangle_{\tilde{H}} = \left\langle A\phi,h\right\rangle_{H} $ for all $\phi\in\mathcal{D}(A)$. The operator $h\mapsto A^{*}h$ is defined as \emph{the adjoint of $A$}.
\end{definition}

The following proposition formalizes the relationship between control occupation kernels and control Liouville operators for trajectories of the system under \emph{admissible} control signals.
\begin{definition}\label{def:admissible}
    A bounded, measurable control signal $u:[0,T]\to \mathbb{R}^m$ is called \emph{admissible for the initial value problem \eqref{eq:openloop_dynamics} over the time interval $[0,T]$ and the domain $X$}, if the corresponding Carath\'{e}odory solution $\gamma_u:[0,T]\to\mathbb{R}^n$ is contained within $X$.
\end{definition}
\begin{proposition}[\!\!\cite{SCC.Rosenfeld.Kamalapurkar2021}]\label{prop:adjoint_kernel_difference}
    If $f$ and $g$ correspond to a densely defined control Liouville operator, $A_{f,g} : \mathcal{D}(A_{f,g}) \to H$, with $\mathcal{D}(A_{f,g})\subset \tilde{H}_d$ and $u$ is an admissible control signal for the initial value problem \eqref{eq:openloop_dynamics} over the time interval $[0,T]$, with a corresponding controlled trajectory $\gamma_u$, then $\Gamma_{\gamma_u,u} \in \mathcal{D}(A_{f,g}^*)$ and
    \begin{equation}\label{eq:adjoint-relationship}
        A_{f,g}^* \Gamma_{\gamma_u,u} = \tilde K_d(\cdot, \gamma_u(T)) - \tilde K_d(\cdot,\gamma_u(0)).
    \end{equation}
\end{proposition}
\begin{proof}
    To demonstrate that $\Gamma_{\gamma_u,u}$ is in $\mathcal{D}(A_{f,g}^*)$ it must be shown that the mapping $h \mapsto \langle A_{f,g} h, \Gamma_{\gamma_u,u}\rangle_{H}$ is a bounded functional. Note that
    \begin{multline}
    \langle A_{f,g} h, \Gamma_{\gamma_u,u}\rangle_{H}
    = \int_{0}^{T} \frac{\partial}{\partial x} \Re(h(\gamma_u(t))) \begin{bmatrix} f(\gamma_u(t)) & g(\gamma_u(t)) \end{bmatrix} \begin{bmatrix} 1\\ u(t) \end{bmatrix} \mathrm{d}t\\
    +\int_{0}^{T} \frac{\partial}{\partial x} \Im(h(\gamma_u(t))) \begin{bmatrix} f(\gamma_u(t)) & g(\gamma_u(t)) \end{bmatrix} \begin{bmatrix} 1\\ u(t) \end{bmatrix} \mathrm{d}t
    = \int_{0}^{T} \frac{\mathrm{d}}{\mathrm{d}t} h(\gamma_u(t)) \mathrm{d}t = h(\gamma_u(T)) - h(\gamma_u(0))\\
    = \left\langle h, \tilde K_d(\cdot,\gamma_{u}(T)) - \tilde K_d(\cdot,\gamma_{u}(0)) \right\rangle_{\tilde{H}_{d}},\label{eq:adjoint_action_proof}
    \end{multline}
    where $\Re(\cdot)$ and $\Im(\cdot)$ denote the real and imaginary parts of a complex vector, respectively. The functional $h \mapsto \langle A_{f,g} h, \Gamma_{\gamma_u,u}\rangle_{H}$ is thus bounded with norm not exceeding $\| \tilde K_d(\cdot,\gamma_{u}(T)) - \tilde K_d(\cdot,\gamma_{u}(0))\|_{\tilde{H}_d}$. By the definition of the adjoint,  \eqref{eq:adjoint_action_proof} implies \eqref{eq:adjoint-relationship}.
\end{proof}
\subsection{Properties of Multiplication Operators\label{subsec:MultiplicationOperators}}
In this section, multiplication operators that map from vvRKHSs to scalar-valued RKHSs are studied. Many of the propositions in this section have been established for scalar-valued RKHSs (cf. \cite{SCC.Rosenfeld2015a,SCC.Rosenfeld2015,SCC.Szafraniec2000}), and are proved using similar methods.

The following proposition investigates the interaction between adjoints of multiplication operators and reproducing kernels of scalar-valued RKHSs.
\begin{proposition}[\!\!\cite{SCC.Rosenfeld.Kamalapurkar2021}]\label{prop:mult-adjoint-kernel}
    If $\nu:X\to\mathcal{Y}$ corresponds to a densely defined multiplication operator $M_{\nu}:\mathcal{D}\left(M_{\nu}\right)\to \tilde{H}_r$ with $\mathcal{D}\left(M_{\nu}\right)\subset H$, then for each $x\in X$, $\tilde K_r(\cdot,x)$ is in the domain of $M_{\nu}^*$ and
    \begin{equation}
        M_{\nu}^* \tilde K_r(\cdot,x) = K_{x,\nu(x)}.\label{eq:mult-adjoint-relation}
    \end{equation}
\end{proposition}

\begin{proof}
    Let $h \in \mathcal{D}(M_{\nu})$, then 
    \begin{equation*}\
    \langle M_{\nu} h, \tilde K_r(\cdot,x) \rangle_{\tilde{H}_{r}} = \langle h(x), \nu(x) \rangle_{\mathcal{Y}} = \langle h, K_{x,\nu(x)} \rangle_H.
  \end{equation*} 
  Hence, the mapping $h \mapsto \langle M_{\nu} h, \tilde K_r(\cdot,x) \rangle_{\tilde{H}_{r}}$ is a bounded functional with norm bounded by $\| K_{x,\nu(x)} \|_{H}$, and as such, $\tilde K_r(\cdot,x)$ is in the domain of $M_{\nu}^{*}$. Moreover, the equation \begin{equation*} \langle M_{\nu} h,\tilde  K_r(\cdot,x) \rangle_{\tilde{H}_{r}} = \langle h, K_{x,\nu(x)} \rangle_H, \end{equation*} along with the definition of the adjoint, establishes \eqref{eq:mult-adjoint-relation}. 
\end{proof}
Proposition \ref{prop:mult-adjoint-kernel}, along with the density of the kernels $\tilde{K}(\cdot,x)$ in $\tilde{H}$ implies that the adjoint of the multiplication operator is also densely defined.
\begin{proposition}[\!\!\cite{SCC.Rosenfeld.Kamalapurkar2021}]
    Multiplication operators are closed operators.
\end{proposition}
\begin{proof}
    Suppose that $\{ h_n \}_{n=1}^\infty \subset \mathcal{D}(M_{\nu})$, $h_{n} \to h \in H$, and $M_{\nu} h_n \to W \in \tilde{H}_{r}.$ To show that $M_{\nu}$ is a closed operator, it must be shown that $W(x) = \langle h(x), \nu(x) \rangle_{\mathcal{Y}}$ for all $x \in X$, and thus $h \in \mathcal{D}(M_{\nu})$ by definition and $M_{\nu} h = W$. Let $v \in \mathcal{Y}$ and $x \in X$, then
    \begin{gather*}
        W(x) = \lim_{n \to \infty} \langle M_{\nu} h_n, \tilde K_r(\cdot,x) \rangle_{\tilde{H}_{r}}
        = \lim_{n\to\infty} \langle h_n, K_{x,\nu(x)} \rangle_{H} = \langle h, K_{x,\nu(x)} \rangle_{H} = \langle h(x), \nu(x) \rangle_{\mathcal{Y}},
    \end{gather*}
    where the first equality follows since norm convergence in $\tilde{H}_{r}$ implies pointwise convergence and the third inequality follows from continuity of the inner product on $H$. 
\end{proof}
 
The following proposition demonstrates how multiplication operators $M_{\overline{\mu}}$, with symbols $\overline{\mu}$ given as $\overline{\mu}(x) = \begin{bmatrix}1 & \mu(x)^{\top}\end{bmatrix},\forall x\in\mathbb{R}^n $, connect occupation kernels $\Gamma_{\gamma}$ with feedback control occupation kernels $\Gamma_{\gamma,\mu\circ\gamma}$.
\begin{proposition}\label{prop:mult_adjoint_feedback}
    If $\mu : X \to \mathbb{R}^m$ is a continuous function, $\overline{\mu} : X\to\mathcal{Y}$ is defined as $\overline{\mu}(x) := \begin{bmatrix}1 & \mu(x)^{\top}\end{bmatrix},\forall x\in X $, the corresponding multiplication operator $M_{\overline{\mu}}:\mathcal{D}\left(M_{\overline{\mu}}\right)\to \tilde{H}_r$, with $\mathcal{D}\left(M_{\overline{\mu}}\right)\subset H$, is densely defined, $\Gamma_{\gamma}\in\tilde{H}_{r}$ is the occupation kernel corresponding to a continuous function $\gamma: [0,T] \to X$ in $\tilde{H}_r$, and $ \left\Vert\Gamma_{\gamma,\mu\circ\gamma}\right\Vert_{H}$ is finite, then $\Gamma_{\gamma}$ is in the domain of $M_{\overline{\mu}}^{*}$ and
    \begin{equation}\label{eq:adjoint-occ-Cocc}
        M_{\overline{\mu}}^* \Gamma_{\gamma} = \Gamma_{\gamma,\mu\circ\gamma}.
    \end{equation}
\end{proposition}
\begin{proof}
    Let $h\in\mathcal{D}\left(M_{\overline{\mu}}\right)$. Using definitions \ref{def:occKer} and \ref{def:control_occker}, it can be concluded that
    \begin{equation}
        \left\langle M_{\overline{\mu}}h,\Gamma_\gamma\right\rangle_{\tilde{H}} = \int_{0}^{T} \left\langle h(\gamma(t)),\overline{\mu}(\gamma(t)) \right\rangle_{\mathcal{Y}} \mathrm{d}t
        = \int_{0}^{T} \left(\Re\left(h\left(\gamma\left(t\right)\right)\right)+\Im\left(h\left(\gamma\left(t\right)\right)\right)\right)\begin{bmatrix}1\\\mu(\gamma(t))\end{bmatrix} \mathrm{d}t
        = \left\langle h,\Gamma_{\gamma,\mu\circ\gamma}\right\rangle_{H},\label{eq:adjoint-proof}
    \end{equation}
    Since the norm of the functional $h\mapsto\left\langle M_{\overline{\mu}}h,\Gamma_\gamma\right\rangle_{\tilde{H}_r}$ is bounded, by $\left\Vert \Gamma_{\gamma,\mu\circ\gamma} \right\Vert_{H}$, which in turn, is finite by assumption, it can be concluded that $\Gamma_\gamma \in \mathcal{D}(M^*_{\overline{\mu}})$. As a result, \eqref{eq:adjoint-proof} implies \eqref{eq:adjoint-occ-Cocc} and the proof of the proposition is complete.
\end{proof}
\begin{remark}
    If the reproducing kernel $K$ for the vvRKHS is derived from the reproducing kernel $\tilde{K}$ of an RKHS via multiplication by a positive definite matrix, then finiteness of $ \left\Vert\Gamma_{\gamma,\mu\circ\gamma}\right\Vert_{H} $  follows from Proposition \ref{prop:occ_ker_representation} and continuity of $\mu$, $\gamma$, and $\tilde{K}$.
\end{remark}
\subsection{Compact and Densely Defined Operators for DMD}\label{subsec:compactDenselyDefinedOperatorsDMD}
As noted in Section \ref{sec:OperatorDMD}, DMD relies on computation of eigenfunctions of a finite-rank representation of an operator that represents the dynamical system. The eigenfunctions of the finite-rank representations can be shown to converge to the eigenfunctions of the true operator if the finite-rank representations themselves converge to the true operator in the norm topology and the true operator is compact \cite{SCC.Korda.Mezic2018a}. Koopman operators, Koopman generators, and Liouville operators are typically not compact if their domains and co-domains are viewed as subsets of the same RKHS \cite{SCC.Korda.Mezic2018a,SCC.Rosenfeld.Kamalapurkar2023a,SCC.Gonzalez.Abudia.easubmitted} (see Remark \ref{rem:compact_perspective}).

As noted in \cite{SCC.Rosenfeld.Kamalapurkar2023a}, Liouville operators corresponding to a large class of dynamical systems are compact provided the domain and the range RKHSs are selected appropriately. However, since the domain and the range RKHSs need to be different, the resulting operators do not admit eigenfunctions. In Section \ref{sec:SVD-DMD}, it is shown that when the domain and the range RKHSs are different, an SVD-based approach can be used to estimate the system dynamics. The SVD-based approach relies on compactness of the total derivative operator and generates sequences of singular values and singular functions that converge to the true singular values and singular functions. As shown in Section \ref{sec:SVD-DMD}, compact total derivative operators result from bounded multiplication operators and compact control Liouville operators, both of which exist for a large class of dynamical systems and feedback laws.

In Section \ref{sec:eig-DMD}, an eigendecomposition-based DMD approach is developed that lacks convergence guarantees but generates useful heuristic approximations of the eigenfunctions under the weaker assumption that the total derivative operator is densely defined. As shown in Section \ref{sec:eig-DMD}, a densely defined total derivative operator results from a densely defined multiplication operator whose range is a subset of the domain of a densely defined control Liouville operator. As discussed in Section \ref{sec:eig-DMD}, such multiplication operators and control Liouville operators also exist for a large class of dynamical systems and feedback laws.

In the following, for an operator $A$ and finite collections of functions $d$ and $r$, in the domain and the range of the operator, respectively, the notation $A\vert_d$ is used to denote the operator $A$ restricted to the set $\vspan d$, and the notation $[A]_{d}^{r}$ is used to denote a matrix representation of the finite-rank operator $ P_r A\vert_d $, where $P_r$ denotes the projection operator onto $\vspan r$.

\section{A Singular Value Decomposition Approach to DMD\label{sec:SVD-DMD}}
With careful selection of the domain and range RKHSs, the total derivative operator $M_{\overline \mu} A_{f,g}$ can be made to be compact.  While the provided framework includes a large class of dynamical systems, a complete characterization of RKHSs and symbols that yield compact differential operators and bounded multiplication operators is out of the scope of this paper.

\subsection{Existence of Bounded Multiplication Operators and Compact Differential Operators}\label{subsec:existence-compactness}
The discussion in this section closely follows \cite{SCC.Rosenfeld.Kamalapurkar2023a}, where a similar result is obtained for systems without control. Consider the exponential dot product kernel with parameter $\tilde{\rho}$, defined as $\tilde{K}_{\tilde{\rho}}(x,y) = \exp\left(\frac{x^{\top}y}{\tilde{\rho}}\right)$. In the single variable case, the native space\footnote{The \emph{native space} of a symmetric positive semidefinite kernel $K$ is the unique RKHS of which $K$ is the reproducing kernel. Such an RKHS is guaranteed to exist by the Moore–Aronszajn theorem \cite{SCC.Aronszajn1950}.} for this kernel is the restriction of the Bargmann-Fock space to real numbers, denoted by $F^2_{\tilde{\rho}}\left(\mathbb{R}\right)$. This space consists of the set of functions of the form $h(x) = \sum_{k=0}^\infty a_k x^k$, where the coefficients satisfy $\sum_{k=0}^\infty \left\vert a_k\right\vert^2 \tilde{\rho}^k k! < \infty$, and the norm is given by $ \left\Vert h \right\Vert^{2}_{\tilde{\rho}} = \sum_{k=0}^\infty \left\vert a_k\right\vert^2 \tilde{\rho}^k k! $. Note that the set of polynomials in $x$ is a subset of $F^2_{\tilde{\rho}}\left(\mathbb{R}\right)$. Extension of this definition to the multivariable case yields the space $F^2_{\tilde{\rho}}\left(\mathbb{R}^n\right)$ where the collection of monomials, $x^{\alpha} \frac{\tilde{\rho}^{|\alpha|}}{\sqrt{\alpha!}}$, with multi-indices $\alpha \in \mathbb{N}^n$ forms an orthonormal basis\footnote{For $\alpha \in \mathbb{N}^n$, $\alpha! = \prod_{i=1}^n \alpha_i !$, $|\alpha| = \sum_{i=1}^n \alpha_i$, and $x^\alpha = \prod_{i=1}^n x_i^{\alpha_i}$.}.
In this setting, provided  $\tilde{\rho}_2 < \tilde{\rho}_1$, differential operators from $F^2_{\tilde{\rho}_1}(\mathbb{R}^n)$ to $F^2_{\tilde{\rho}_2}(\mathbb{R}^n)$ can be shown to be compact.
\begin{proposition}\label{prop:differential_compact}
    If $\tilde{\rho}_2 < \tilde{\rho}_1$, then the differential operators $\frac{\partial}{\partial x_i}  : F^2_{\tilde{\rho}_1}(\mathbb{R}^n) \to F^2_{\tilde{\rho}_2}(\mathbb{R}^n)$, are compact for $i=1,\ldots,n$. 
\end{proposition}
\begin{proof}
 To facilitate the clarity of exposition, the proof is written for functions of a single variable. Extension to functions of several variables using multi-indices is conceptually straightforward. Let $h\in F^2_{\tilde{\rho}_1}(\mathbb{R}) $ be given by $h(x) = \sum_{k=0}^\infty a_k x^k$, with $\frac{\partial h}{\partial x} = \sum_{k=0}^\infty b_k x^k $, where $b_k = (k+1) a_{k+1}$. The norm of the derivative in $F^2_{\tilde{\rho}_2}(\mathbb{R})$ is given by
\begin{equation*}
     \left\Vert\frac{\partial h}{\partial x}\right\Vert^{2}_{\tilde{\rho}_2} = \sum_{k=0}^\infty \left\vert b_k\right\vert^2 \tilde{\rho}_2^k k! 
     = \sum_{k=0}^\infty \left\vert a_{k+1}\right\vert^2 \frac{k+1}{\tilde{\rho}_1}\left(\frac{\tilde{\rho}_2}{\tilde{\rho}_1}\right)^k\tilde{\rho}_1^{(k+1)} (k+1)!
     = \sum_{k=0}^\infty \left\vert a_{k}\right\vert^2 \frac{k}{\tilde{\rho}_1}\left(\frac{\tilde{\rho}_2}{\tilde{\rho}_1}\right)^{(k-1)}\tilde{\rho}_1^{k} k!
\end{equation*}
If $\tilde{\rho}_2 < \tilde{\rho}_1$ then there exists a constant $C<\infty$ such that  $ \frac{k}{\tilde{\rho}_1}\left(\frac{\tilde{\rho}_2}{\tilde{\rho}_1}\right)^{(k-1)} \leq C$ for all $k\in\mathbb{N}$. As a result, $\left\Vert\frac{\partial h}{\partial x}\right\Vert^{2}_{\tilde{\rho}_2} \leq C \left\Vert h\right\Vert^{2}_{\tilde{\rho}_1}$, which establishes boundedness of the differential operator $\frac{\partial}{\partial x}  : F^2_{\tilde{\rho}_1}(\mathbb{R}) \to F^2_{\tilde{\rho}_2}(\mathbb{R})$. 

To prove compactness, we construct a sequence of finite-rank operators that converge, in norm, to $\frac{\partial}{\partial x}$.  Let $\alpha_M := \{ 1, x, \ldots, x^M\}$ be the first $M$ monomials in $x$, and let $P_{\alpha_M}$ be the projection onto the span of these monomials. Consider the sequence $\{P_{\alpha_M}\frac{\partial}{\partial x}\}$ of finite-rank operators. Let $h\in F^2_{\tilde{\rho}_1}(\mathbb{R}) $ be given by $h(x) = \sum_{k=0}^\infty a_k x^k$. Then,
\begin{multline*}
    \left\Vert P_{\alpha_M}\frac{\partial h}{\partial x} - \frac{\partial h}{\partial x} \right\Vert^2_{\tilde{\rho}_2} = \!\!\sum_{k = M+1}^{\infty} (k+1)^2 \left\vert a_{k+1}\right\vert^2 \tilde{\rho}_2^{(k+1)} (k+1)!
   = \sum_{k = M+1}^{\infty} (k+1)^2\left(\frac{\tilde{\rho}_2}{\tilde{\rho}_1}\right)^{(k+1)} \sum_{k = M+1}^{\infty} \left\vert a_{k+1}\right\vert^2\tilde{\rho}_1^{(k+1)} (k+1)!\\
   \leq \sum_{k = M+1}^{\infty} (k+1)^2\left(\frac{\tilde{\rho}_2}{\tilde{\rho}_1}\right)^{(k+1)} \left\Vert h\right\Vert^{2}_{\tilde{\rho}_1}.
\end{multline*}
If $\tilde{\rho}_2 < \tilde{\rho}_1$ then {\thickmuskip=0mu\thinmuskip=0mu\medmuskip=0mu$ \lim_{M\to\infty} \sum_{k = M+1}^{\infty} (k+1)^2 \left(\frac{\tilde{\rho}_2}{\tilde{\rho}_1}\right)^{(k+1)} = 0 $}, and as a result, $\lim_{M\to\infty} \left\Vert P_{\alpha_M}\frac{\partial h}{\partial x} - \frac{\partial h}{\partial x} \right\Vert^2_{\tilde{\rho}_2} = 0$. Therefore, the operator norm 
 \[
    \left\| P_{\alpha_M} \frac{\partial}{\partial x} - \frac{\partial}{\partial x} \right\|_{F_{\tilde{\rho}_1}^2(\mathbb{R})}^{F_{\tilde{\rho}_2}^2(\mathbb{R})} := \sup_{h \in F_{\tilde{\rho}_1}^2(\mathbb{R})} \frac{\| P_{\alpha_M} \frac{\partial h}{\partial x} - \frac{\partial h}{\partial x}\|_{\tilde{\rho}_2}}{\|h\|_{\tilde{\rho}_1}}
\]
converges to zero as $M \to\infty$, which establishes compactness of  $\frac{\partial}{\partial x}  : F^2_{\tilde{\rho}_1}(\mathbb{R}) \to F^2_{\tilde{\rho}_2}(\mathbb{R})$. 
\end{proof}
\begin{remark}{\label{rem:compact_perspective}}
    Note that if $\tilde{\rho}_2 < \tilde{\rho}_1$ then $F^2_{\tilde{\rho}_1}(\mathbb{R}^n) \subset F^2_{\tilde{\rho}_2}(\mathbb{R}^n)$ \cite[Proposition 5.1]{SCC.Rosenfeld.Kamalapurkar2023a}. In this case, one can view the differential operators as maps from $F^2_{\tilde{\rho}_2}(\mathbb{R}^n)$ to itself. However, when viewed as such, the differential operators may not be compact.
\end{remark}

As shown in \cite{SCC.Rosenfeld.Kamalapurkar2023a}, multiplication operators can be shown to be bounded provided their symbols are polynomial.
\begin{proposition}\label{prop:multiplication_bounded}
    If $\tilde{\rho}_2 < \tilde{\rho}_1$, then for any polynomial function $p:\mathbb{R}^n \to \mathbb{R}$
    , the multiplication operator $M_{p} : F^2_{\tilde{\rho}_1}(\mathbb{R}^n) \to F^2_{\tilde{\rho}_2}(\mathbb{R}^n)$, defined as $\left[M_p h\right](x) = p(x)h(x)$, is bounded.
\end{proposition}
\begin{proof}
    See \cite[Lemma 3.2]{SCC.Rosenfeld.Kamalapurkar2023a}.
\end{proof}
Proposition \ref{prop:multiplication_bounded} trivially extends to vvRKHSs defined using diagonal reproducing kernels.
\begin{proposition}\label{prop:multiplication_bounded_vvRKHS}
    Let $F^2_{\rho}(\mathbb{R}^n)$ denote the native (row) vvRKHS of a diagonal reproducing kernel defined as $K(x,y) := \diag \left(\begin{bmatrix} \tilde{K}_{\rho_1}(x,y), & \ldots, & \tilde{K}_{\rho_{m+1}}(x,y) \end{bmatrix} \right) $. If $\rho_i < \tilde{\rho}$ for $i=1,\ldots,m+1$, then given any set of polynomials $p_i$, $i=1,\ldots,m+1$, the multiplication operator $M_{p_1,\ldots,p_{m+1}} : F^2_{\tilde{\rho}}(\mathbb{R}^n) \to F^2_{\rho}(\mathbb{R}^n)$, defined as $\left[M_{p_1,\ldots,p_{m+1}} h\right](x) = h(x)\begin{bmatrix}
        p_1(x),&\ldots,&p_{m+1}(x)
    \end{bmatrix}$, is bounded. On the other hand, if $ \tilde{\rho} < \rho_i $ for $i=1,\ldots,m+1$, then for any component-wise polynomial function $\mu = \begin{bmatrix} \mu_1,\ldots,\mu_m\end{bmatrix}^\top:\mathbb{R}^n \to \mathbb{R}^m$, the multiplication operator $M_{\overline{\mu}}:F^2_{\rho}(\mathbb{R}^n)\to F^2_{\tilde{\rho}}(\mathbb{R}^n)$, defined as $\left[M_{\overline{\mu}} h\right](x) = h(x) \begin{bmatrix} 1,&\mu_1(x),&\ldots,&\mu_m(x)\end{bmatrix}^{\top} $, is bounded.
\end{proposition}
\begin{proof}
    Follows from arguments similar to Lemma 3.2 from \cite{SCC.Rosenfeld.Kamalapurkar2023a}.
\end{proof}

Since Koopman operators are generally unbounded for any nonlinear system \cite{SCC.Gonzalez.Abudia.easubmitted}, the above propositions make a strong case for spectral analysis of continuous-time systems in the Liouville operator (or Koopman generator) framework as opposed to discretization and subsequent application of the Koopman operator framework.

\subsection{Finite-rank Representation of the Closed Loop Total Derivative Operator}
Since the dynamic modes may only be extracted from the composition of $M_{\overline{\mu}}$ with $A_{f,g}$, an explicit finite-rank representation of $A_{f,g}$ and $M_{\overline{\mu}}$ is needed to determine the dynamic modes of the resultant system. In the following, finite collections of linearly independent vectors, $d^M$, $\varpi^M$, $\beta^M$, and $r^M$ are selected to establish the needed finite-rank representation. Since the adjoint of $ A_{f,g} $ maps control occupation kernels to kernel differences (Proposition \ref{prop:adjoint_kernel_difference}), the span of the collection of kernel differences
\begin{equation}
    d^M = \left\{K_d(\cdot,\gamma_{u_i}(T_i)) - K_d(\cdot,\gamma_{u_i}(0))\right\}_{i=1}^M\subset \tilde{H}_{d}
\end{equation}
is selected to be the domain of  $ A_{f,g} $. The corresponding Gram matrix is denoted by $G_{d^M} = \left(\left\langle d_i,d_j\right\rangle_{\tilde{H}_d}\right)_{i,j=1}^M$. The output of $A_{f,g}$ is projected onto the span of the control occupation kernels
\begin{equation}
    \beta^M = \left\{\Gamma_{\gamma_{u_i},u_i}\right\}_{i=1}^M\subset H
\end{equation}
before application of $M_{\overline{\mu}}$. The corresponding Gram matrix is denoted by  $G_{\beta^M} = \left(\left\langle\beta_i,\beta_j\right\rangle_{H}\right)_{i,j=1}^M$.

\begin{figure}[h]
    \centering
\begin{tikzpicture}[>= latex]
    \node[minimum size=1.25cm, circle, fill=lightgray, circle] (H) {$H$};
    \node[minimum size=1.25cm, circle, fill=lightgray, above left=0cm and 6cm of H.center, anchor=center] (Hd) {$\tilde{H}_d$};
    \node[minimum size=1cm, fill=lightgray, above left=1cm and 3cm of H.center, anchor=center] (sd) {$\vspan{d^M}$};
    \node[minimum size=1cm, fill=lightgray, below=3cm of H.center, anchor=center] (sb) {$\vspan{\beta^M}$};
    \node[minimum size=1.25cm, fill=lightgray, circle, below left=0cm and 6cm of sb.center, anchor=center] (Hr) {$\tilde{H}_r$};
    \node[minimum size=1cm, fill=lightgray, above right=1.5cm and 3cm of Hr.center, anchor=center] (sr) {$\vspan{r^M}$};
    \draw[->,very thick] (sd) -- node[midway, fill=white](Afg){$ A_{f,g} $} (H) ;
    \draw[->,very thick] (H) -- node[midway, fill=white](Pb){$ P_{\beta^M} $} (sb) ;
    \draw[->,very thick] (sb) -- node[midway, fill=white](Mu){$ M_{\overline{\mu}} $} (Hr) ;
    \draw[->,very thick] (Hr) -- node[midway, fill=white](Pr){$ P_{r^M} $} (sr) ;
    \draw[->,very thick] (Hd) -- node[midway, fill=white](Pr){$ P_{d^M} $} (sd) ;
    \draw[->,very thick,dashed] (sd) -- node[midway, fill=white]{$ P_{r^M}M_{\overline{\mu}} P_{\beta^M} A_{f,g} |_{d^M} $} (sr) ;
    \draw[->,thick,dashed] (Hd) -- node[midway, fill=white]{$ M_{\overline{\mu}}A_{f,g}$} (Hr) ;
\end{tikzpicture}
    \caption{A schematic diagram of the finite-rank representation of the total derivative operator.}
    \label{fig:finite-rank-schematic}
\end{figure}

Since the adjoint of $M_{\overline{\mu}}$ maps occupation kernels to control occupation kernels of the form $\Gamma_{\gamma_{u_i},\mu\circ\gamma_{u_i}}$ (Proposition \ref{prop:mult_adjoint_feedback}), the derivation also requires the collection
\begin{equation}
    \varpi^M = \left\{\Gamma_{\gamma_{u_i},\mu\circ\gamma_{u_i}}\right\}_{i=1}^M\subset H
\end{equation}
of feedback control occupation kernels in $H$ corresponding to the trajectories $\gamma_{u_i}$ and control signals $\mu\circ\gamma_{u_i}$.
Finally, the result of $M_{\overline{\mu}}$ is projected onto the span of the occupation kernels
\begin{equation}
    r^M = \left\{\Gamma_{\gamma_{u_i}}\right\}_{i=1}^M\subset \tilde{H}_{r}.
\end{equation}
The corresponding Gram matrix is denoted by $G_{r^M} = \left(\left\langle r_i,r_j\right\rangle_{\tilde{H}_r}\right)_{i,j=1}^M$. 

A rank-$M$ representation of the operator $M_{\overline{\mu}} A_{f,g}$ is then given by  $P_{r^M} M_{\overline{\mu}} P_{\beta^M} A_{f,g}P_{d^M}:\tilde{H}_d\to\vspan r^M$, where $P_{r^M}$, $P_{d^M}$, and $P_{\beta^M}$ denote projection operators onto $\vspan{r^M}$, $\vspan{d^M}$, and $\vspan{\beta^M}$, respectively. The construction is illustrated in Fig. \ref{fig:finite-rank-schematic}.

Under the compactness assumptions and given rich enough data so that the spans of $ \{d_i\}_{i=1}^\infty $, $\{r_i\}_{i=1}^\infty$, and $\{\beta_i\}_{i=1}^\infty $ are dense in $\tilde{H}_{d}$, $\tilde{H}_{r}$, and $H$, respectively, the sequence of finite-rank operators $\{P_{r^M} M_{\overline{\mu}} P_{\beta^M} A_{f,g}P_{d^M}\}_{M=1}^{\infty}$ can be shown to converge, in norm topology, to $M_{\overline{\mu}}A_{f,g}$. To facilitate the proof of convergence, we recall the following result from \cite{SCC.Rosenfeld.Kamalapurkar.ea2022}.
\begin{lemma}\label{lem:technical-lemma}
    Let $H$ and $G$ be RKHSs defined on $X\subset \mathbb{R}^n$ and let $A_N:H\to G$ be a finite-rank operator with rank $N$. If the spans of $\{d_i\}_{i=1}^{\infty}$ and $\{r_i\}_{i=1}^{\infty}$ are dense in $H$  and $G$, respectively, then for all $\epsilon > 0 $, there exists $M(N)\in\mathbb{N}$ such that for all $i\geq M(N)$ and $h\in H$, $\left\Vert A_N h - A_N P_{d^i} h\right\Vert_{G} \leq \epsilon \left\Vert h \right\Vert_H$ and $\left\Vert A_N h - P_{r^i} A_N h\right\Vert_{G} \leq \epsilon \left\Vert h \right\Vert_H$.
\end{lemma}
\begin{proof}
    See the proof of \cite[Theorem 2]{SCC.Rosenfeld.Kamalapurkar.ea2022}.
\end{proof}
The convergence result for Liouville operators on Bargmann-Fock spaces restricted to the set of real numbers follows from the following more general result.
\begin{proposition}\label{prop:convergence}
    If $B:H\to \tilde{H}_r$ is a bounded linear operator, $A:\tilde{H}_d\to H$ is a compact operator, and the spans of $ \{d_i\}_{i=1}^\infty $, $\{r_i\}_{i=1}^\infty$, and $\{\beta_i\}_{i=1}^\infty $ are dense in $\tilde{H}_{d}$, $\tilde{H}_{r}$, and $H$, respectively, then $\lim_{M\to\infty}\left\Vert BA - P_{r^M} B P_{\beta^M} A P_{d^M} \right\Vert_{\tilde{H}_d}^{\tilde{H}_r} = 0$, where $\left\Vert\cdot\right\Vert_{\tilde{H}_d}^{\tilde{H}_r}$ denotes the operator norm of operators from $\tilde{H}_d$ to $\tilde{H}_r$.
\end{proposition}
\begin{proof}
    Let $\{A_N\}_{N=1}^\infty$ be a sequence of rank-$N$ operators converging, in norm, to $A$. For an arbitrary $h\in \tilde{H}_d$, 
\begin{multline*}
    \left\Vert BAh - P_{r^M} B P_{\beta^M} AP_{d^M} h\right\Vert_{\tilde{H}_r} \leq \left\Vert BAh - BA_{N}h  \right\Vert_{\tilde{H}_r} + \left\Vert BA_{N}h  - P_{r^M}BP_{\beta^M}TP_{d^M}h\right\Vert_{\tilde{H}_r}\\
    \leq \left\Vert BAh - BA_{N}h  \right\Vert_{\tilde{H}_r} 
    + \left\Vert BA_{N}h - BA_{N}P_{d^M}h \right\Vert_{\tilde{H}_r}
    +\left\Vert BA_{N}P_{d^M}h - BP_{\beta^M}A_{N}P_{d^M}h \right\Vert_{\tilde{H}_r} \\
    +\left\Vert BP_{\beta^M}A_{N}P_{d^M}h - P_{r^M}BP_{\beta^M}A_{N}P_{d^M}h \right\Vert_{\tilde{H}_r}     +\left\Vert P_{r^M}BP_{\beta^M}A_{N}P_{d^M}h - P_{r^M}BP_{\beta^M} A P_{d^M}h\right\Vert_{\tilde{H}_r}.
\end{multline*}
Assuming that the operator norm of $B$ is $\overline{B}$,
\begin{multline*}
    \left\Vert BAh - P_{r^M} B P_{\beta^M} AP_{d^M} h\right\Vert_{\tilde{H}_r} 
    \leq \overline{B}\left\Vert Ah - A_{N}h  \right\Vert_{H} + 
    \overline{B}\left\Vert A_{N}h - A_{N}P_{d^M}h \right\Vert_{H} 
    +\overline{B} \left\Vert A_{N}P_{d^M}h - P_{\beta^M}A_{N}P_{d^M}h \right\Vert_{H} \\
    +\left\Vert BP_{\beta^M}A_{N}P_{d^M}h - P_{r^M}BP_{\beta^M}A_{N}P_{d^M}h \right\Vert_{\tilde{H}_r} 
    +\overline{B} \left\Vert A_{N}P_{d^M}h - A P_{d^M}h\right\Vert_{H}.
\end{multline*}
Using the fact that $A_N$ and $BP_{\beta^M}A_{N}P_{d^M}$ are finite-rank operators, Lemma \ref{lem:technical-lemma}, can be used to conclude that for all $\epsilon > 0 $, there exists $M(N)\in\mathbb{N}$ such that for all $i\geq M(N)$
\begin{equation*}
    \left\Vert BAh - P_{r^i} B P_{\beta^i} AP_{d^i} h\right\Vert_{\tilde{H}_r} 
    \leq \overline{B}\left\Vert Ah - A_{N}h  \right\Vert_{H}
    + 3\overline{B}\epsilon\left\Vert h \right\Vert_{\tilde{H}_d}
    +\overline{B} \left\Vert A_{N}P_{d^i}h - A P_{d^i}h\right\Vert_{H}.
\end{equation*}
Since $A_{N}$ converges to $A$ in norm, given $\epsilon > 0$, there exists $N\in \mathbb{N}$ such that for all $j\geq N$, and  $g\in \tilde{H}_d$ $\left\Vert Ag - A_{j}g  \right\Vert_{H} \leq \epsilon \left\Vert g  \right\Vert_{\tilde{H}_d} $. Thus, for all  $j\geq N$ and  $i\geq M(j)$, $ \left\Vert BAh - P_{r^i} B P_{\beta^i} AP_{d^i} h\right\Vert_{\tilde{H}_r}
    \leq 5\overline{B}\epsilon\left\Vert h \right\Vert_{\tilde{H}_d}$.
\end{proof}
The convergence result for control Liouville operators on Bargmann-Fock spaces restricted to the set of real numbers can then be stated as follows.
\begin{theorem}\label{thm:norm-convergence}
    Let $\rho_d\in\mathbb{R}$, $\varrho_d\in\mathbb{R}$, $\rho_r\in\mathbb{R}$, and $\rho=\begin{bmatrix}
        \rho_1&\ldots&\rho_{m+1}
    \end{bmatrix}^{\top}\in\mathbb{R}^{m+1}$ be parameters such that  $ \rho_r < \rho_i $, $ \rho_i < \varrho_d $, and $\varrho_d < \rho_d $ for $i = 1,\ldots,m+1$. Let $\tilde{H}_d = F^2_{\tilde{\rho}_d}(\mathbb{R}^n)$, $\tilde{G}_d = F^2_{\tilde{\varrho}_d}(\mathbb{R}^n)$, $\tilde{H}_r = F^2_{\tilde{\rho}_r}(\mathbb{R}^n)$, and $H = F^2_{\rho}(\mathbb{R}^n)$. If $f$, $g$, and $\mu$ are component-wise polynomial, and if the spans of the collections $ \{d_i\}_{i=1}^\infty $, $\{r_i\}_{i=1}^\infty$, and $\{\beta_i\}_{i=1}^\infty $ are dense in $\tilde{H}_{d}$, $\tilde{H}_{r}$, and $H$, respectively, then $\lim_{M\to\infty}\left\Vert M_{\overline{\mu}}A_{f,g} - P_{r^M} M_{\overline{\mu}} P_{\beta^M} A_{f,g} P_{d^M} \right\Vert_{\tilde{H}_d}^{\tilde{H}_r} = 0$.
\end{theorem}
\begin{proof}
    Propositions \ref{prop:differential_compact}, \ref{prop:multiplication_bounded}, and \ref{prop:multiplication_bounded_vvRKHS} imply that $M_{\overline{\mu}}$ is bounded and $A_{f,g}$ is compact. Since multiplication operators are linear by definition, the theorem follows from Proposition \ref{prop:convergence}. 
\end{proof}

\subsection{Matrix Representation of the Finite-rank Operator}
 To formulate a matrix representation of the finite-rank operator $P_{r^M} M_{\overline{\mu}} P_{\beta^M} A_{f,g}P_{d^M} $, the operator is restricted to $\vspan d^M$ to yield the operator $P_{r^M} M_{\overline{\mu}} P_{\beta^M} A_{f,g}|_{d^M}:\vspan{d^M}\to\vspan{r^M}$. For brevity of exposition, the superscript $M$ is suppressed hereafter and $d$, $\beta$, $\varpi$, and $r$ are interpreted as $M-$dimensional vectors. 
\begin{proposition}
If $ h = \delta^\top d\in \vspan{d}$ is a function with coefficients $d\in\mathbb{R}^M$ and if $g = P_r M_{\overline{\mu}} P_\beta A_{f,g} h$, then $g = a^{\top}r$, where $a = G_r^{+} I G_\beta^{+} G_d \delta$, $I := \left(\left\langle \varpi_i,\beta_j \right\rangle_{H}\right)_{i,j=1}^M$, and $(\cdot)^{+}$ denotes the Moore-Penrose pseudoinverse.
\end{proposition}
\begin{proof}
    Propositions \ref{prop:adjoint_kernel_difference} and \ref{prop:mult_adjoint_feedback} imply that that for all $j=1,\cdots,M$, $ A_{f,g}^* \beta_j = d_j$, and $M_{\overline{\mu}}^* r_j = \varpi_j$, respectively. Note that since $g$  is a projection of $M_{\overline{\mu}} P_\beta A_{f,g} h$ onto $\vspan r$, $g = a^{\top}r$ for any $a$ that solves
    \begin{equation} \label{eq:alpha_projection}
        G_r a = \begin{bmatrix}
        \left\langle M_{\overline{\mu}} P_\beta A_{f,g}h,r_1\right\rangle_{\tilde{H}_{r}}\\\vdots\\\left\langle M_{\overline{\mu}} P_\beta A_{f,g}h,r_M\right\rangle_{\tilde{H}_{r}}
        \end{bmatrix} = \begin{bmatrix}
            \left\langle A_{f,g}h,P_\beta M_{\overline{\mu}}^*r_1\right\rangle_{H}\\\vdots\\\left\langle  A_{f,g},P_\beta M_{\overline{\mu}}^*r_M\right\rangle_{H}
        \end{bmatrix}.
    \end{equation}
    Furthermore, for all $j=1,\cdots,M$, $P_\beta M_{\overline{\mu}}^*r_1 = b_j^{\top}\beta$, for any $b_j$ that solves
    \begin{equation}\label{eq:beta_projection}
        G_\beta b_j = \begin{bmatrix}
            \left\langle M_{\overline{\mu}}^*r_j,\beta_1 \right\rangle_{H}\\\vdots\\\left\langle M_{\overline{\mu}}^*r_j,\beta_M \right\rangle_{H}
        \end{bmatrix} = \begin{bmatrix}
            \left\langle \varpi_j,\beta_1 \right\rangle_{H}\\\vdots\\\left\langle \varpi_j,\beta_M \right\rangle_{H}
        \end{bmatrix}.
    \end{equation}
    As a result, $ b\coloneqq\begin{bmatrix} b_1, &\ldots,& b_M \end{bmatrix}$ is a solution of
    \begin{equation}
        G_\beta b = \left(\left\langle \varpi_j,\beta_i \right\rangle_{H}\right)_{i,j=1}^M
        = I^\top.\label{eq:b_matrix}
    \end{equation}
    Substituting \eqref{eq:beta_projection} into \eqref{eq:alpha_projection},
    \begin{equation*}
        G_r a = \begin{bmatrix}
            \left\langle A_{f,g}h,b_1^\top\beta\right\rangle_{H}\\\vdots\\\left\langle  A_{f,g}h,b_M^\top\beta\right\rangle_{H}
        \end{bmatrix} = \begin{bmatrix}
            \left\langle h,b_1^\top A_{f,g}^*\beta\right\rangle_{\tilde{H}_{d}}\\\vdots\\\left\langle  h,b_M^\top A_{f,g}^*\beta\right\rangle_{\tilde{H}_{d}}
        \end{bmatrix},
    \end{equation*}
    where $A_{f,g}^*\beta$ is interpreted as $A_{f,g}^*\beta = \begin{bmatrix}
        A_{f,g}^*\beta_1,&\ldots,&A_{f,g}^*\beta_M
    \end{bmatrix}^\top$. Using $A_{f,g}^* \beta_j = d_j$ and $ h = \delta^\top d$, 
    \begin{equation*}
        G_r a = \begin{bmatrix}
            \left\langle \delta^\top d,b_1^\top d\right\rangle_{\tilde{H}_{d}}\\\vdots\\\left\langle  \delta^\top d,b_M^\top d\right\rangle_{\tilde{H}_{d}}
        \end{bmatrix}
        = \begin{bmatrix}b_1^\top G_d d\\\vdots\\b_M^\top G_d d
        \end{bmatrix}
    \end{equation*}
    Selecting solutions of \eqref{eq:alpha_projection} and  \eqref{eq:b_matrix} that minimize the 2-norm of $a$ and $b_j$, respectively,
    \begin{equation}
        a = G_r^{+} b^\top G_d \delta = G_r^{+} I G_\beta^{+} G_d \delta = G_r^{+} I G_\beta^{+} G_d \delta.
    \end{equation}
    That is, a matrix representation $[M_{\overline{\mu}} P_\beta A_{f,g}]_d^r$ of the operator $P_r M_{\overline{\mu}} P_\beta A_{f,g}|_d$ is given by $G_r^{+} I G_\beta^{+} G_d$.
\end{proof}
Note that matrix representations are generally not unique. Different representations may be obtained by selecting different solutions of \eqref{eq:alpha_projection} and \eqref{eq:b_matrix}. In the case where the Gram matrices $G_r$ and $G_{\beta}$ are nonsingular, equations \eqref{eq:alpha_projection} and \eqref{eq:b_matrix} have unique solutions, resulting in the unique matrix representation $G_r^{-1} I G_\beta^{-1} G_d$.

In the following section, the matrix representation $[M_{\overline{\mu}} P_\beta A_{f,g}]_d^r$ is used to construct a data-driven representation of the singular values and the left and right singular functions of $P_r M_{\overline{\mu}} P_\beta A_{f,g}\vert_d$. 

\subsection{Singular Functions of the Finite-rank Operator}
Recall that the tuples $\{(\sigma_i,\phi_i,\psi_i)\}_{i=1}^M$, with $\sigma_i\in \mathbb{R}^n$, $\phi_i\in \tilde{H}_{d}$, and $\psi_i\in \tilde{H}_{r}$, are singular values, left singular vectors, and right singular vectors of $P_r M_{\overline{\mu}} P_\beta A_{f,g}|_d$, respectively, if $\forall h\in\vspan{d}$, $ P_r M_{\overline{\mu}} P_\beta A_{f,g} h = \sum_{i=1}^M \sigma_i \psi_i \left\langle h,\phi_i\right\rangle_{\tilde{H}_{d}}$. The following proposition states that the SVD of $P_r M_{\overline{\mu}} P_\beta A_{f,g}|_d$ can be computed using matrices in the matrix representation $[M_{\overline{\mu}} P_\beta A_{f,g}]_d^r$ developed in the previous section.
\begin{proposition}
    If $ (W,\Sigma,V) $ is the SVD of $G_r^{+} I G_\beta^{+}$ with $W = \begin{bmatrix} w_1,&\ldots,&w_M \end{bmatrix}$, $V = \begin{bmatrix} v_1,&\ldots,&v_M \end{bmatrix}$, and $\Sigma = \diag\left(\begin{bmatrix} \sigma_1,&\ldots,&\sigma_M \end{bmatrix}\right)$, then for all $i=1,\ldots,M$, $\sigma_i$ are singular values of $P_r M_{\overline{\mu}} P_\beta A_{f,g}|_d$ with left singular functions $\phi_i := v_i^\top d$ and right singular functions $\psi_i := w_i^\top r$.
\end{proposition}
\begin{proof}
    Let $\phi_i = v_i^\top d$ and $\psi_i = w_i^\top r$ and $h = \delta^\top d$. Then, 
\begin{equation*}
    P_r M_{\overline{\mu}} P_\beta A_{f,g} h = \sum_{i=1}^M \sigma_i \psi_i \left\langle h,\phi_i\right\rangle_{\tilde{H}_{d}}
    \iff P_r M_{\overline{\mu}} P_\beta A_{f,g} \delta^\top d = \sum_{i=1}^M \sigma_i w_i^\top r \left\langle \delta^\top d, v_i^\top d\right\rangle_{\tilde{H}_{d}}
\end{equation*}
Using the finite-rank representation, the collection $\{(\sigma_i,\phi_i,\psi_i)\}_{i=1}^M$, is an SVD of $P_r M_{\overline{\mu}} P_\beta A_{f,g}|_d$, if for all $\delta\in\mathbb{R}^M$,
\begin{equation}\label{eq:suff_cond_SVD}
    \left(G_r^{+} I G_\beta^{+} G_d \delta\right)^\top r = \left(\sum_{i=1}^M \sigma_i \left\langle \delta^\top d, v_i^\top d\right\rangle_{\tilde{H}_{d}} w_i^\top \right)r.
\end{equation}
Simple matrix manipulations yield the chain of implications
\begin{gather*}
    {\thickmuskip=0mu\thinmuskip=0mu\medmuskip=0mu\eqref{eq:suff_cond_SVD}\impliedby \forall \delta\in\mathbb{R}^M,G_r^{+} I G_\beta^{+} G_d \delta = \sum_{i=1}^M \sigma_i \left\langle \delta^\top d, v_i^\top d\right\rangle_{\tilde{H}_{d}} w_i}
    \iff \forall \delta\in\mathbb{R}^M,G_r^{+} I G_\beta^{+} G_d \delta 
    = \sum_{i=1}^M \sigma_i  \left(w_i v_i^\top G_d\right) \delta\\
    \impliedby G_r^{+} I G_\beta^{+} G_d = \sum_{i=1}^M \sigma_i  \left(w_i v_i^\top \right)G_d
    \impliedby G_r^{+} I G_\beta^{+} = \sum_{i=1}^M \sigma_i w_i v_i^\top = W\Sigma V^\top,
\end{gather*}
which proves the proposition.
\end{proof}
In the following section, the singular values and the left and right singular vectors are used, along with a finite truncation of \eqref{eq:infinite_spectral_reconstruction_svd} to generate a data-driven model.

\subsection{The SCLDMD Algorithm}
Motivated by \eqref{eq:total_derivative_model}, assuming that $h_{\mathrm{id},j} \in \tilde{H}_{d}$ for $j=1,\cdots,n$, the system dynamics are approximated using the rank-$M$ representation as $\dot{x} \approx \hat{F}_{\mu,M} (x) := [P_{r} M_{\overline{\mu}} P_{\beta} A_{f,g} P_{d} h_{\mathrm{id}}] (x)$, where $P_{r} M_{\overline{\mu}} P_{\beta} A_{f,g} P_{d} h_{\mathrm{id}}$ denotes row-wise operation of the operator $P_{r} M_{\overline{\mu}} P_{\beta} A_{f,g} P_{d}$ on the function $h_{\mathrm{id}}$. Since $P_{r} M_{\overline{\mu}} P_{\beta} A_{f,g} P_{d}$ converges to $M_{\overline{\mu}} A_{f,g}$ in norm as $M\to\infty$, and since the space $F^2_{\tilde{\rho}_d}(\mathbb{R}^n)$ contains $h_{\mathrm{id},j}$ for $j=1,\cdots,n$, the following result is immediate.
\begin{corollary}\label{cor:uniform-convergence}
    Under the hypothesis of Theorem \ref{thm:norm-convergence},  $\lim_{M\to\infty}\left(\sup_{x\in X}\left\Vert \hat{F}_{\mu,M}(x) - F_\mu(x) \right\Vert_{2}\right) = 0$.
\end{corollary}
\begin{proof}
   Since the space $F^2_{\tilde{\rho}_d}(\mathbb{R}^n)$ contains $h_{\mathrm{id},j}$ for $j=1,\cdots,n$, the functions $\hat{F}_{\mu,M,j} \coloneqq P_{r^j} M_{\overline{\mu}} P_{\beta^j} A_{f,g} P_{d^j} h_{\mathrm{id},j} $ and $F_{\mu,j} \coloneqq M_{\overline{\mu}} A_{f,g} h_{\mathrm{id},j} $ that denote the $j-$th row of $\hat{F}_{\mu,M}$ and $F_{\mu}$, respectively, exist as members of $\tilde{H}_{r}$. Since  $x\mapsto \tilde{K}_r(x,x) = \exp\left(\frac{x^\top x}{\tilde{\rho}_r}\right)$ is continuous and $X$ is compact, there exists a real number $\overline{K}$ such that $\sup_{x\in X}\tilde{K}_r(x,x) = \overline{K}$. Theorem \ref{thm:norm-convergence} can then be used to conclude that for all $\epsilon > 0$ and $j=1,\ldots,n$, there exists $M(j) \in \mathbb{N}$ such that for all $i\geq M(j)$, $\left\Vert \hat{F}_{\mu,i,j} - F_{\mu,j}\right\Vert_{\tilde{H}_{r}}^{2} \leq \frac{\epsilon^2}{n\overline{K}^2}$. Using the reproducing property, for $i\geq \overline{M} \coloneqq \max_j M(j)$, 
\begin{multline*}
    \left\Vert \hat{F}_{\mu,i}(x) - F_\mu(x) \right\Vert_{2}^{2} = \sum_{j=1}^{n} \left\langle \left(\hat{F}_{\mu,i,j} - F_{\mu,j}\right),\tilde{K}_r(\cdot,x)\right\rangle_{\tilde{H}_{r}}^{2}
    \leq \sum_{j=1}^{n} \left\Vert \hat{F}_{\mu,i,j} - F_{\mu,j} \right\Vert_{\tilde{H}_{r}}^{2}\left\Vert \tilde{K}_r(\cdot,x) \right\Vert_{\tilde{H}_{r}}^{2}\\
    \leq \sum_{j=1}^{n} \frac{\epsilon^2}{n\overline{K}^2}\left\langle \tilde{K}_r(\cdot,x),\tilde{K}_r(\cdot,x) \right\rangle_{\tilde{H}_{r}}^{2} = \frac{\epsilon^2}{\overline{K}^2}\tilde{K}_r(x,x)^{2}.
\end{multline*}
As a result, for all $\epsilon \geq 0$  there exists $\overline{M}$ such that for all $i\geq \overline{M}$,
\begin{equation*}
    \sup_{x\in X}\left\Vert \hat{F}_{\mu,i}(x) - F_\mu(x) \right\Vert_{2} \leq \sqrt{\frac{\epsilon^2}{\overline{K}^2}\sup_{x\in X}\tilde{K}_r(x,x)^{2}} = \epsilon,
\end{equation*}
which completes the proof.
\end{proof}
Using the definition of singular values and singular functions,
\begin{equation}
    \dot{x} \approx
    \sum_{i=1}^M \sigma_i \xi_i w_i^\top r (x) = \xi\Sigma W^\top r(x),
\end{equation}
where $\xi_i \coloneqq \left\langle P_d h_{\mathrm{id}},\phi_i\right\rangle_{\tilde{H}_{d}}$ and $\xi := \begin{bmatrix}
    \xi_1,&\ldots,&\xi_M
\end{bmatrix}$.

The modes $\xi$ can be computed using $\phi_i = v_i^\top d$ as
\begin{gather*}
    \xi = \begin{bmatrix}
    \left\langle P_d h_{\mathrm{id},1},v_1^\top d\right\rangle_{\tilde{H}_{d}},&\ldots,&\left\langle P_d h_{\mathrm{id},1},v_M^\top d\right\rangle_{\tilde{H}_{d}}\\
    \vdots & \ddots & \vdots\\
\left\langle P_d h_{\mathrm{id},n},v_1^\top d\right\rangle_{\tilde{H}_{d}},&\ldots,&\left\langle P_d h_{\mathrm{id},n},v_M^\top d\right\rangle_{\tilde{H}_{d}}
\end{bmatrix}
= \begin{bmatrix}
    \left\langle \delta_1^\top d,d_1\right\rangle_{\tilde{H}_{d}},&\ldots,&\left\langle \delta_1^\top d,d_M\right\rangle_{\tilde{H}_{d}}\\
    \vdots & \ddots & \vdots\\
\left\langle \delta_n^\top d,d_1\right\rangle_{\tilde{H}_{d}},&\ldots,&\left\langle \delta_n^\top d,d_M\right\rangle_{\tilde{H}_{d}}
\end{bmatrix} V
= \delta^\top G_d V,
\end{gather*}
where $\delta \coloneqq  \begin{bmatrix} \delta_1, &\ldots, &\delta_n \end{bmatrix}$. Using the reproducing property of the reproducing kernel of $\tilde{H}_d$, the coefficients $\delta_i$ in the projection of $h_{\mathrm{id},i}$ onto $d$ satisfy
{\thinmuskip=0mu \thickmuskip=0mu \medmuskip=0mu \begin{equation*}
    G_d \delta_i = \begin{bmatrix}
        \left\langle\left(h_{\mathrm{id}}\right)_i,d_1\right\rangle_{\tilde{H}_{d}}\\ \vdots \\ \left\langle\left(h_{\mathrm{id}}\right)_i,d_M\right\rangle_{\tilde{H}_{d}}
    \end{bmatrix} = \begin{bmatrix}
        \left(\gamma_{u_1}(T_1)\right)_i - \left(\gamma_{u_1}(0)\right)_i\\ \vdots \\ \left(\gamma_{u_M}(T_M)\right)_i - \left(\gamma_{u_M}(0)\right)_i
    \end{bmatrix}.
\end{equation*}}
Letting $D \coloneqq \left(\left(\gamma_{u_j}(T_j)\right)_i - \left(\gamma_{u_j}(0)\right)_i\right)_{i,j=1}^{n,M} $ it can be concluded that $ \delta^\top G_d = D$. Finally, the modes $\xi$ are given by $\xi = D V$ and the estimated closed-loop model is given by
\begin{equation}\label{eq:convergent_closed_loop_model}
    \dot{x} \approx \hat{F}_{\mu,M}(x) = D V\Sigma W^\top r(x) = D G_\beta^{+} I^\top G_r^{+} r(x)
\end{equation}
The SCLDMD technique is summarized in Algorithm \ref{alg:SCLDMD}. The characterization $ \Gamma_{\gamma_{u_j}} = \int_0^{T_j} \tilde{K}\left(\cdot,\gamma_{u_j}(t)\right) \mathrm{d}t$ of occupation kernels, introduced in \cite{SCC.Rosenfeld.Russo.eatoappear}, is used on line \ref{line:psi}.
\begin{algorithm}
    \caption{\label{alg:SCLDMD}The SCLDMD algorithm}
    \begin{algorithmic}[1]
        \renewcommand{\algorithmicrequire}{\textbf{Input:}}
        \renewcommand{\algorithmicensure}{\textbf{Output:}}
        \REQUIRE Trajectories $\{\gamma_{u_i}\}_{i=1}^{M}$, a feedback law $\mu$, a numerical integration procedure, reproducing  kernels $\tilde{K}_d$, $\tilde{K}_r$, and $K$ of $\tilde{H}_d$, $\tilde{H}_r$, and $H$, respectively, and regularization parameters $\epsilon_r$ and $\tilde{\epsilon}$.
        \ENSURE $\{\xi_j,\sigma_j,\varphi_j,\phi_j\}_{j=1}^{M}$
        \STATE $G_\beta \leftarrow \left(\left\langle \Gamma_{\gamma_{u_i},u_i}, \Gamma_{\gamma_{u_j},u_j} \right\rangle_{H} \right)_{i,j=1}^M$ (see \eqref{eq:Control_occ_ker_gram_matrix})
        \STATE $G_r \leftarrow  \left( \left\langle\Gamma_{\gamma_{u_i}},\Gamma_{\gamma_{u_j}}\right\rangle_{\tilde{H}_r}\right)_{i,j=1}^{M}$ (see \eqref{eq:compute_gram_matrix_occ_kernel})
        \STATE $ I \leftarrow \left(\left\langle \Gamma_{\gamma_{u_i},\mu\circ\gamma_{u_i}},\Gamma_{\gamma_{u_j},u_j}\right\rangle_H\right)_{i,j=1}^{M} $ (see \eqref{eq:int_matrix_occ_kernel})
        \STATE $ D \leftarrow \left( \left(\gamma_{u_j}(T_j)\right)_i - \left(\gamma_{u_j}(0)\right)_i\right)_{i,j=1}^{n,M}$
        \STATE $(W,\Sigma,V)\leftarrow$ SVD of $ G_r^{+} I G_\beta^{+} $ (See Remark \ref{rem:rank_deficient})
        \STATE $\xi \leftarrow DV$
        \STATE $\phi_j \leftarrow \sum_{i=1}^M\leftarrow (V)_{i,j} \left(K_d(\cdot,\gamma_{u_i}(T_i)) - K_d(\cdot,\gamma_{u_i}(0))\right)$
        \STATE $\psi_j \leftarrow \sum_{i=1}^M \int_0^{T_i} (W)_{i,j}\tilde{K}\left(\cdot,\gamma_{u_i}(t)\right) \mathrm{d}t$\label{line:psi}
        \RETURN $\{\xi_j,\sigma_j,\varphi_j,\phi_j\}_{j=1}^{M}$ 
    \end{algorithmic} 
\end{algorithm}

\section{Eigendecomposition Approach to DMD \label{sec:eig-DMD}}
In this section, an alternative finite-rank representation of the operator $M_{\overline{\mu}}A_{f,g}$ is presented, where its domain and range are assumed to be subsets of the same RKHS $\tilde{H}$ of complex-valued continuously differentiable functions, with a real-valued reproducing kernel $\tilde{K}$. In particular, the finite-rank representation of $ M_{\overline{\mu}}A_{f,g} $ is selected to be $P_r M_{\overline{\mu}} P_\beta A_{f,g}|_r$, where the domain and the range are both $\vspan{r}$. A consequence of this choice is that the finite-rank representation admits eigenfunctions which could potentially generate an approximate invariant subspace of the closed-loop system. 

While eigenfunctions of the finite-rank representation exist, they generally cannot be shown to converge to eigenfunctions of the original operator, since the operators $M_{\overline{\mu}}$ and $A_{f,g}$ can no longer be assumed to be bounded and compact, respectively. Instead, they are assumed to be densely defined. Since the operators are not defined everywhere, we need the additional assumptions that 1) the image of $A_{f,g}$ is contained within the domain of $M_{\overline{\mu}}$, 2) the span of $r$ is a subset of the domain of $A_{f,g}$, and 3) the functions $h_{\mathrm{id},j}$ can be well-approximated by linear combinations of the eigenfunctions of the finite-rank representation for $j=1,\ldots,n$. Due to the lack of convergence guarantees and since the assumptions on $\vspan r$ and $h_{\mathrm{id},j}$ are difficult to verify, the resulting algorithm, while useful, is heuristic in nature. Since unbounded operators over Hilbert spaces of real-valued functions can have empty spectra, in this section, the RKHS $\tilde{H}$ is assumed to be composed of complex-valued functions of real variables of the form $h:X \to \mathbb{C}$.

The operators $M_{\overline{\mu}}$ and $A_{f,g}$  are densely defined in a large class of problems. For example, if the domain and range spaces in Section \ref{subsec:existence-compactness} are selected to have identical kernel parameters, then the resulting operators are densely defined \cite{SCC.Rosenfeld.Russo.eatoappear}, and the image of $A_{f,g}$ is also contained within the domain of $M_{\overline{\mu}}$. The assumption that $\vspan r \subset \mathcal{D}(A_{f,g})$ can be removed in favor of the assumption that the matrix that encodes the finite rank representation of $M_{\overline{\mu}}A_{f,g}$ is approximately equal to the transpose of the matrix that encodes the finite rank representation of the adjoint $ A_{f,g}^* M_{\overline{\mu}}^* $ (see \cite{SCC.Rosenfeld.Kamalapurkar2021}).

\subsection{Matrix Representation of the Finite-rank Operator}
In this section, a matrix representation of the finite-rank representation is developed.
\begin{proposition}
    If $ h = \delta^\top r\in \vspan{r}$ is a function with coefficients $\delta\in\mathbb{C}^{M}$, $A_{f,g}$ and $M_{\overline{\mu}}$ are densely defined, $\vspan r \subset \mathcal{D}(A_{f,g})$, $\vspan \beta \subset \mathcal{D}(M_{\overline{\mu}})$, and $g = P_r M_{\overline{\mu}} P_\beta A_{f,g} h$, then $g = a^\top r$ with $a = G_{r}^{+}IG_{\beta}^{+}\tilde{I}^{\top}\delta$.
\end{proposition} 
\begin{proof}
The coefficients $a = \begin{bmatrix}
a_1&\cdots&a_{M}
\end{bmatrix}^{\top} \in \mathbb{C}^M$ in the projection of $ M_{\overline{\mu}} P_\beta A_{f,g}h \in \tilde{H}$ onto $\vspan r$ are given by the solution of the linear system
\begin{equation}
    G_{r} a = \begin{bmatrix} \left\langle M_{\overline{\mu}} P_\beta A_{f,g}h,r_{1}\right\rangle_{\tilde{H}} \\ \vdots \\  \left\langle M_{\overline{\mu}} P_\beta A_{f,g}h,r_{M}\right\rangle_{\tilde{H}} \end{bmatrix}\label{eq:Gram_alpha}.
\end{equation}
A matrix representation of $P_r M_{\overline{\mu}} P_\beta A_{f,g}\vert_r$ relates the coefficients $\delta = \begin{bmatrix}
\delta_1&\cdots&\delta_{M}
\end{bmatrix}^{\top}  \in \mathbb{C}^M$ of a function $ h = \delta^\top r\in\vspan r $, with the coefficients $a$ above. Using the properties of the multiplication operator and the control Liouville operator established in the previous sections, the inner products $\left\langle M_{\overline{\mu}} P_\beta A_{f,g}h,r_j \right\rangle_{\tilde{H}}$ on the right hand side can be evaluated as
\begin{equation*}
    \left\langle M_{\overline{\mu}} P_\beta A_{f,g}h,r_j \right\rangle_{\tilde{H}} = \sum_{i=1}^{M}\delta_i\left\langle A_{f,g}r_i,P_\beta M_{\overline{\mu}}^{*} r_j \right\rangle_{H}
    =\sum_{i=1}^{M}\delta_i\left\langle A_{f,g}r_i,\sum_{k=1}^{M}b_{k,j}\Gamma_{\gamma_{u_k},u_k} \right\rangle_{H}.
\end{equation*}
where $\left\{b_{k,j}\right\}_{k=1}^{M}\subset\mathbb{C}$ are the coefficients in the projection of $M_{\overline{\mu}}^{*} r_j \in H$ onto $\vspan\beta$, which can be computed by solving
\begin{equation}
G_{\beta} \begin{bmatrix} b_{1,j} \\ \vdots \\ b_{M,j} \end{bmatrix} = \begin{bmatrix} \left\langle M_{\overline{\mu}}^{*} r_j, \Gamma_{\gamma_{u_1},u_1}\right\rangle_H \\ \vdots \\ \left\langle M_{\overline{\mu}}^{*} r_j, \Gamma_{\gamma_{u_M},u_M} \right\rangle_H \end{bmatrix}\label{eq:Gram_beta}.
\end{equation}
Note that since the control occupation kernels $\beta_i=\Gamma_{\gamma_{u_i},u_i}$ the occupation kernels $r_i=\Gamma_{\gamma_{u_i}}$, and the symbol $\overline{\mu}$ are all real-valued functions, the coefficients $b_{i,j}$ are real numbers. The inner product can thus be further simplified as
\begin{equation*}
    \left\langle M_{\overline{\mu}} P_\beta A_{f,g}h,r_j \right\rangle_{\tilde{H}}=\sum_{i=1}^{M}\delta_i\sum_{k=1}^{M}b_{k,j}\left\langle r_i,A_{f,g}^{*}\Gamma_{\gamma_{u_k},u_k} \right\rangle_{\tilde{H}}
    =\delta^{\top}\tilde{I}b_j,
\end{equation*}
where $b_j \coloneqq \begin{bmatrix} b_{1,j}&\cdots&b_{M,j} \end{bmatrix}^{\top}$, and $\tilde{I} \coloneqq \left(\left\langle r_i,A_{f,g}^{*}\Gamma_{\gamma_{u_k},u_k} \right\rangle_{\tilde{H}}\right)_{i,k=1}^{M}$ is the interaction matrix corresponding to $\tilde{H}$. Stacking the inner products on the left hand side in a column and selecting solutions of \eqref{eq:Gram_alpha}  and \eqref{eq:Gram_beta} that minimize the 2-norm of $a$ and $b_j$, respectively, it can be concluded that $ a = G_{r}^{+}IG_{\beta}^{+}\tilde{I}^{\top}\delta $, where  $ I \coloneqq \left(\left\langle M_{\overline{\mu}}^{*}r_j,\Gamma_{\gamma_{u_k},u_k}\right\rangle_H\right)_{j,k=1}^{M} $ is the interaction matrix corresponding to $H$. A matrix representation $[M_{\overline{\mu}} P_\beta A_{f,g}]_r^r \in\mathbb{R}^M$ of the finite-rank representation $P_r M_{\overline{\mu}} P_\beta A_{f,g}\vert_r$ of the operator $M_{\overline{\mu}} A_{f,g}$ is thus given by $ [M_{\overline{\mu}} P_\beta A_{f,g}]_r^r = G_{r}^{+} I G_{\beta}^{+}\tilde{I}^{\top}$.
\end{proof}
In the following section, the matrix representation $[M_{\overline{\mu}} P_\beta A_{f,g}]_r^r$ is used to construct a data-driven representation of the eigenvalues and the eigenfunctions of $P_r M_{\overline{\mu}} P_\beta A_{f,g}\vert_r$. 
\subsection{Eigenfunctions of the finite-rank representation\label{subsec:CLDMD}}
Given an eigenvalue $\tilde{\lambda}_j\in\mathbb{C}$ and the corresponding eigenvector $\tilde{v}_j := \begin{bmatrix}
\tilde{v}_{1,j}&\cdots&\tilde{v}_{M,j}
\end{bmatrix}^{\top}\in\mathbb{C}^M$ of $[M_{\overline{\mu}} P_\beta A_{f,g}]_r^r$ and the vector $r := \begin{bmatrix}
r_1&\cdots&r_{M}
\end{bmatrix}^{\top}$ of occupation kernels in $\tilde{H}$, it is straightforward to show that $\varphi_j = \left(\nicefrac{1}{\sqrt{\tilde{v}_j^\dagger G_{r} \tilde{v}_j}}\right) \tilde{v}_{j}^{\top}r$ is an eigenfunction of $P_r M_{\overline{\mu}} P_\beta A_{f,g}\vert_r$, where $(\cdot)^\dagger$ denotes the conjugate transpose. Indeed, by the definition of the matrix $[M_{\overline{\mu}} P_\beta A_{f,g}]_r^r$, it can be seen that $ P_r M_{\overline{\mu}} P_\beta A_{f,g}\vert_{r}\varphi_j =  \left(\nicefrac{1}{\sqrt{\tilde{v}_j^\dagger G_{r} \tilde{v}_j}}\right)\left([M_{\overline{\mu}} P_\beta A_{f,g}]_r^r \tilde{v}_j\right)^{\top}r = \tilde{\lambda}_j\left(\nicefrac{1}{\sqrt{\tilde{v}_j^\dagger G_{r} \tilde{v}_j}}\right) \tilde{v}_j^{\top}r $.

Using the fact that $ r_i(x) = \Gamma_{\gamma_{u_i}}(x) = \int_0^{T_i} \tilde{K}(x,\gamma_{u_i}(t)) \mathrm{d}t $, the eigenfunctions, evaluated at a point $x\in\mathbb{R}^n$, can be computed as
\begin{equation}
    \varphi_j(x) = \frac{1}{\sqrt{\tilde{v}_j^\dagger G_{r} \tilde{v}_j}}\sum_{i=1}^{M}\tilde{v}_{i,j}\int_0^{T_i} \tilde{K}\left(x,\gamma_{u_i}(t)\right) \mathrm{d}t.\label{eq:occ_kernel_eigenfunctions}
\end{equation}
In the following section, the eigenvalues and the eigenfunctions are used to generate a data-driven model.
\subsection{The CLDMD Algorithm}
Let $\tilde{W} = \left(\nicefrac{\tilde{v}_{i,j}}{\sqrt{\tilde{v}_j^\dagger G_{r} \tilde{v}_j}}\right)_{i,j=1}^{M}\in\mathbb{C}^{M\times M}$ be the matrix of coefficients of the normalized eigenfunctions, arranged so that each column corresponds to an eigenfunction. Assuming that $h_{\mathrm{id},j}$ is in the span of the above eigenfunctions for each $j=1,\cdots, n$, a representation of the identity function as a linear combination of a fixed number of eigenfunctions is given as $h_{\mathrm{id}}(x) \approx \sum_{i=1}^{M} \xi_i \varphi_i(x)$, where $ \{\xi_i\}_{i=1}^{M}\subset\mathbb{C}^{n} $ are the so-called \emph{control-Liouville modes}. Similar to \cite[Section 4.2]{SCC.Rosenfeld.Kamalapurkar.ea2022}, by examining the inner products $\left\langle h_{\mathrm{id},j},r_i\right\rangle_{\tilde{H}}$, the matrix $\xi \coloneqq \begin{bmatrix}\xi_1,& \cdots,& \xi_{M}\end{bmatrix}$ can be shown to be a solution of the linear system of equations 
\begin{equation}
    \xi\left(\tilde{W}^{\top}G_{r}\overline{\tilde{W}}\right) = R\overline{\tilde{W}}
    ,\label{eq:modes_lin_sys}
\end{equation}
where $R \coloneqq \left(\left\langle h_{\mathrm{id},j},r_i \right\rangle_{\tilde{H}}\right)_{j,i=1}^{n,M}$ and  $\overline{\tilde{W}}$ denotes the complex conjugate of $\tilde{W}$. Indeed, letting $\xi_{i,j}$ denote the $j-$th element of the vector $\xi_i$, the row of coefficients $\xi^j\coloneqq \begin{bmatrix}
    \xi_{1,j},& \ldots,\xi_{M,j}
\end{bmatrix}\in\mathbb{C}^{1\times M} $  in the projection of $h_{
\mathrm{id},j
}$ onto the span of the eigenfunctions $\{\varphi_i\}_{i=1}^M$ is a solution of
\begin{equation}
    \xi^j G_{\varphi}^{\top} = \begin{bmatrix}
        \left\langle h_{\mathrm{id},j},\varphi_1 \right\rangle_{\tilde{H}},&
        \ldots,&
        \left\langle h_{\mathrm{id},j},\varphi_M \right\rangle_{\tilde{H}}
    \end{bmatrix},\label{eq:ModesRowWise}
\end{equation}
where $G_{\varphi} = \left(\left\langle \varphi_i,\varphi_j\right\rangle_{\tilde{H}}\right)_{i,j=1}^M \in \mathbb{C}^{M\times M}$. Using the fact that $\left\langle \varphi_i,\varphi_j\right\rangle_{\tilde{H}} = \left\langle \tilde{w}_{i}^{\top}r,\tilde{w}_{j}^{\top}r\right\rangle_{\tilde{H}} = \tilde{w}_j^{\dagger} G_r \tilde{w}_i$, where $\tilde{w}_{j} \coloneqq \left(\nicefrac{1}{\sqrt{\tilde{v}_j^\dagger G_{r} \tilde{v}_j}}\right)\tilde{v}_j $, denotes the $j-$th column of $\tilde{W}$, the Gram matrix $G_\varphi$ can be expressed as $G_\varphi = \tilde{W}^\dagger G_r \tilde{W}$. Furthermore, using the fact that $\left\langle h_{\mathrm{id},j},\varphi_i \right\rangle_{\tilde{H}} = \left\langle h_{\mathrm{id},j},\tilde{w}^{\top}_i r \right\rangle_{\tilde{H}} = \begin{bmatrix}
    \left\langle h_{\mathrm{id},j},r_1 \right\rangle_{\tilde{H}},&\ldots,& \left\langle h_{\mathrm{id},j},r_M \right\rangle_{\tilde{H}}
\end{bmatrix} \overline{\tilde{w}_i}$, where $\overline{\tilde{w_i}}$ denotes the complex conjugate of $\tilde{w}_i$, the right hand side of \eqref{eq:ModesRowWise} can be expressed as $\begin{bmatrix}
        \left\langle h_{\mathrm{id},j},\varphi_1 \right\rangle_{\tilde{H}},&
        \ldots,&
        \left\langle h_{\mathrm{id},j},\varphi_M \right\rangle_{\tilde{H}}
    \end{bmatrix} = \begin{bmatrix}
    \left\langle h_{\mathrm{id},j},r_1 \right\rangle_{\tilde{H}},&\ldots,& \left\langle h_{\mathrm{id},j},r_M \right\rangle_{\tilde{H}}
\end{bmatrix} \overline{\tilde{W}}$. Concatenating \eqref{eq:ModesRowWise} for $j=1,\ldots,n$ into a column vector, the matrix $\xi$ is seen to be a solution of \eqref{eq:modes_lin_sys}.

Using the fact that any solution of $\xi \tilde{W}^\top G_r = R$ is also a solution of \eqref{eq:modes_lin_sys}, selecting the solution of $\xi \tilde{W}^\top G_r = R$ that minimizes the 2-norm of $\xi_i$ for $i=1,\cdots,M$, and using the relationship $ \left\langle h_{\mathrm{id},j},r_i \right\rangle_{\tilde{H}} = \left\langle h_{\mathrm{id},j},\Gamma_{\gamma_{u_i}} \right\rangle_{\tilde{H}} = \int_{0}^{T_i} \gamma_{u_i,j}(t)\mathrm{d}t $, where $\gamma_{u_i,j}(t)$ denotes the $j-$th component of $\gamma_{u_i}(t)$, a set of control Liouville modes can be obtained as
\begin{equation}
    \xi = \begin{bmatrix}
    \int_{0}^{T_1} \gamma_{u_1}(t)\mathrm{d}t & \cdots & \int_{0}^{T_{M}} \gamma_{u_{M}}(t)\mathrm{d}t
\end{bmatrix}\left(\tilde{W}^{\top}G_{r}\right)^{+}.\label{eq:occ_kernel_modes}
\end{equation} 
The response $t\mapsto\gamma_{\mu}(t)$ of the system, starting from the initial condition $\gamma_{\mu}(0)=\gamma_0$, under the feedback control law $\mu$, can then be predicted as
\begin{equation}
    \gamma_{\mu}(t) \approx \sum_{j=1}^{M} \xi_j \varphi_j(\gamma_0) \mathrm{e}^{\tilde{\lambda}_j t}.\label{eq:finite_spectral_reconstruction}
\end{equation}

Furthermore, a pointwise approximation of the closed-loop model can also be obtained as
\begin{equation}
    \dot{x} \approx \hat{F}_{\mu,M}(x) \coloneqq \sum_{j=1}^{M} \tilde{\lambda}_j \xi_j \varphi_j(x).\label{eq:vector_field_reconstruction}
\end{equation}
The CLDMD method is summarized in Algorithm \ref{alg:CLDMD}.
\begin{algorithm}
    \caption{\label{alg:CLDMD}The CLDMD algorithm}
    \begin{algorithmic}[1]
        \renewcommand{\algorithmicrequire}{\textbf{Input:}}
        \renewcommand{\algorithmicensure}{\textbf{Output:}}
        \REQUIRE Trajectories $\{\gamma_{u_i}\}_{i=1}^{M}$, a feedback law $\mu$, a numerical integration procedure, Reproducing  kernel $\tilde{K}$ of $\tilde{H}$, Reproducing  kernel $K$ of $H$, and if needed, regularization parameters $\epsilon$ and $\tilde{\epsilon}$.
        \ENSURE $\{\xi_j,\lambda_j,\varphi_j\}_{j=1}^{M}$
        \STATE $G_{\beta} \leftarrow \left(\left\langle \Gamma_{\gamma_{u_i},u_i}, \Gamma_{\gamma_{u_j},u_j} \right\rangle_{H}\right)_{i,j=1}^M$ (see \eqref{eq:Control_occ_ker_gram_matrix})
        \STATE $G_{r} \leftarrow  \left( \left\langle r_i,r_j\right\rangle_{\tilde{H}}\right)_{i,j=1}^{M}$ (see 
        \eqref{eq:compute_gram_matrix_occ_kernel})
        \STATE $ I \leftarrow \left(\left\langle M_{\overline{\mu}}^{*}r_j,\Gamma_{\gamma_{u_k},u_k}\right\rangle_H\right)_{j,k=1}^{M} $ (see 
        \eqref{eq:int_matrix_occ_kernel})
        \STATE $\tilde{I} \leftarrow \left(\left\langle r_i,A_{f,g}^{*}\Gamma_{\gamma_{u_k},u_k} \right\rangle_{\tilde{H}}\right)_{i,k=1}^{M}$ (see 
        \eqref{eq:int_tilde_matrix_occ_kernel})
        \STATE $[M_{\overline{\mu}} P_\beta A_{f,g}]_r^r \leftarrow G_{r}^{+} I G_{\beta}^{+}\tilde{I}^{\top}$ (See Remark \ref{rem:rank_deficient})
        \STATE $\{\lambda_j,\tilde{v}_j\}_{j=1}^{M}\leftarrow $ eigendecomposition of $ [M_{\overline{\mu}} P_\beta A_{f,g}]_r^r $
        \STATE $\tilde{W} \leftarrow \left(\nicefrac{\tilde{v}_{i,j}}{\sqrt{\tilde{v}_j^\dagger G_{r} \tilde{v}_j}}\right)_{i,j=1}^{M}$
        \STATE Compute $\{\xi_j,\varphi_j\}_{j=1}^{M}$ using 
        \eqref{eq:occ_kernel_modes} and \eqref{eq:occ_kernel_eigenfunctions} 
        (See Remark \ref{rem:rank_deficient})
        \RETURN $\{\xi_j,\lambda_j,\varphi_j\}_{j=1}^{M}$ 
    \end{algorithmic} 
\end{algorithm}

\section{Computation of Inner Products}\label{sec:computations}
The elements of the Gram matrix $G_{\beta}$, corresponding to $\beta$, can be computed using Proposition \ref{prop:occ_ker_representation} as
\begin{equation}
    \left\langle \Gamma_{\gamma_{u_i},u_i}, \Gamma_{\gamma_{u_j},u_j} \right\rangle_{H}
    =\int\limits_{0}^{T_j}\int\limits_{0}^{T_i} \begin{bmatrix}1 & u_i^{\top}(\tau)\end{bmatrix}  K\left(\gamma_{u_j}(t),\gamma_{u_i}(\tau)\right)\begin{bmatrix}
    1 \\ u_j(t)
    \end{bmatrix}\mathrm{d}\tau \mathrm{d}t\label{eq:Control_occ_ker_gram_matrix}
\end{equation}
The elements of the Gram matrix $G_{r}$ can be computed using the double integral (cf. \cite{SCC.Rosenfeld.Kamalapurkar.ea2022})
\begin{equation}
    \left\langle\Gamma_{\gamma_{u_i}}, \Gamma_{\gamma_{u_j}}\right\rangle_{\tilde{H}} = \int\limits_{0}^{T_j}\int\limits_{0}^{T_i}\tilde{K}\left(\gamma_{u_j(t)},\gamma_{u_i(\tau)}\right)\mathrm{d}\tau\mathrm{d}t.\label{eq:compute_gram_matrix_occ_kernel}
\end{equation}
Using Proposition \ref{prop:adjoint_kernel_difference}, the elements of the interaction matrix $\tilde{I}$ can be evaluated as
\begin{equation}\hspace*{-1em}
    \left\langle \!\Gamma_{\gamma_{u_i}}\!,A_{f,g}^{*}\Gamma_{\gamma_{u_k},u_k} \!\right\rangle_{\tilde{H}}\!\!
    = \!\left\langle\! \Gamma_{\gamma_{u_i}}\!,\!\tilde{K}(\cdot,\gamma_{u_k}\!(T_k)\!)\! -\! \tilde{K}(\cdot,\gamma_{u_k}(0)\!) \!\right\rangle_{\tilde{H}}
    = \int\limits_{0}^{T_i}\left(\tilde{K}(\gamma_{u_i}(t),\gamma_{u_k}(T_k))-\tilde{K}(\gamma_{u_i}(t),\gamma_{u_k}(0))\right)\mathrm{d}t.\label{eq:int_tilde_matrix_occ_kernel}
\end{equation}
Using Proposition \ref{prop:mult_adjoint_feedback}, the elements of the interaction matrix $I$ can be evaluated as 
\begin{equation}
    \left\langle M_{\overline{\mu}}^{*}\Gamma_{\gamma_{u_i}},\Gamma_{\gamma_{u_k},u_k}\right\rangle_H = \left\langle \Gamma_{\gamma_{u_i},\mu\circ\gamma_{u_i}},\Gamma_{\gamma_{u_k},u_k}\right\rangle_H
    = \!\!\int\limits_{0}^{T_j}\!\int\limits_{0}^{T_i}\!\! \begin{bmatrix}1 & \mu^{\top}\!(\gamma_{u_i}(\tau)\!)\!)\end{bmatrix}\! K\!\left(\gamma_{u_j}(t),\gamma_{u_i}(\tau)\!\right)\!\!\begin{bmatrix}\!
    1 \\ u_j(t)
    \!\end{bmatrix}\!\mathrm{d}\tau \mathrm{d}t.\label{eq:int_matrix_occ_kernel}
\end{equation}
Assuming that each trajectory is sampled at $N$ points in time, the computation of $G_\beta$ and $I$ is $O(nN^2M^2(m+1)^2)$, the computation of $G_r$ is $O(nN^2M^2)$, and the computation of $\tilde{I}$ is $O(nNM^2)$. Computation of the finite-rank representation and its decomposition are $O(M^3)$. Evaluation of the occupation kernel is $O(nN)$. 

\begin{remark}\label{rem:rank_deficient}
    In addition to the Moore-Penrose pseudoinverse, the SCLDMD and CLDMD algorithms can also be implemented using regularization. Regularization involves replacing the Gram matrices $G_{\beta}$ and $G_{r}$ by $G_{\beta}+\epsilon \mathrm{I}_M$ and $G_{r} + \tilde{\epsilon} \mathrm{I}_{M}$, respectively, whenever they need to be inverted, where $\mathrm{I}_M$ denotes the $M\times M$ identity matrix, and $\epsilon>0$ and $\tilde{\epsilon}>0$ are user-selected regularization coefficients.
\end{remark}

\section{Numerical Experiments\label{sec:sims}}
Two numerical experiments are performed to evaluate the developed SCLDMD and CLDMD methods, one using a simulated controlled Duffing oscillator and another using a simulated two-link robot manipulator.
\subsection{Controlled Duffing oscillator\label{subsec:duffing}}
This experiment concerns the controlled Duffing oscillator
\begin{equation*}
    \dot{x}_1 = x_2,\qquad \dot{x}_2 = x_1 - x_1^3 + \left(2 + \sin(x_1)\right)u,
\end{equation*}
where $x=\begin{bmatrix} x_1&x_2 \end{bmatrix}^{\top}\in\mathbb{R}^2$ is the state and $u\in\mathbb{R}$ is the control. A total of 225 open-loop trajectories of the controlled Duffing oscillator are generated using the MATLAB\textsuperscript{\textregistered} \texttt{ode45} solver, starting from initial conditions on a $15\times 15$ regular grid on a $6 \times 6$ square centered at the origin of the state space, $\mathbb{R}^2$. The control signal used for trajectory generation is of the form $u(t) = \sum_{i=1}^{15} b_i\sin(\omega_i t + \varphi_i)$, where the magnitudes $b_i$, the frequencies $\omega_i$, and the phase differences $\varphi_i$ are generated randomly from a uniform distribution on the interval $[-1,1]$. All trajectories are recorded over a duration of $1$\si{\second}, and are sampled at a frequency of $20$ \si{\hertz}.

The trajectories are then utilized to predict the behavior of the oscillator under the state feedback controller $\mu(x) = \begin{bmatrix} -2&-2 \end{bmatrix}x$. CLDMD is implemented using the exponential dot product reproducing kernel $ \tilde{K}_{\tilde{\rho}} = \exp\left(\frac{x^\top y}{\tilde{\rho}}\right) $ with parameter $\tilde{\rho} = 5$, and a diagonal kernel given by $ K = \mathrm{diag}\begin{bmatrix} \tilde{K}_{\tilde{\rho}} & \tilde{K}_{\tilde{\rho}}\end{bmatrix} $. SCLDMD is implemented using $\tilde{K}_r = \tilde{K}_{5} $, $ K = \mathrm{diag}\begin{bmatrix} \tilde{K}_{6} & \tilde{K}_{6}\end{bmatrix} $, and $\tilde{K}_d = \tilde{K}_{7} $. Simpson's 1/3 rule is used to compute the integrals involved in algorithms \ref{alg:SCLDMD} and \ref{alg:CLDMD}.

\subsubsection{Vector Field Reconstruction}
Fig. \ref{fig:vector_field_error} shows a side by side comparison of the pointwise 2-norm of the relative error between the approximated vector field $\hat{F}_{\mu,M}$ (generated using \eqref{eq:convergent_closed_loop_model} for SCLDMD and \eqref{eq:vector_field_reconstruction} for CLDMD), and the true vector field, $F_{\mu}$. The results in Fig. \ref{fig:vector_field_error} indicate that both the CLDMD and the SCLDMD methods are able to obtain accurate estimates of the closed-loop vector field on a domain contained within the grid of initial conditions of the data.
\begin{figure}[ht]
    \centering
    \begin{tikzpicture}
    \begin{axis}[
        width=0.5\columnwidth,
        colormap/viridis,
        xlabel={$x_1$},
        ylabel={$x_2$},
        zlabel={Relative Error},
        title style={font=\scriptsize},
        title={SCLDMD},
        zmax=0.0008,
        label style={font=\scriptsize},
        ylabel style={yshift=0.3cm,xshift=0.25cm},
        xlabel style={yshift=0.3cm,xshift=-0.25cm},
        tick label style={font=\scriptsize},
        view={-35}{25},
        enlarge y limits=0,
        enlarge x limits=0,
        enlarge z limits=0.01]
    \addplot3[surf, mesh/rows=9, shader=interp] table [] {data/DuffingSCLDMDVectorFieldError.dat};
    \end{axis}
    \end{tikzpicture}
    \begin{tikzpicture}
    \begin{axis}[
        width=0.5\columnwidth,
        colormap/viridis,
        xlabel={$x_1$},
        ylabel={$x_2$},
        zlabel={Relative Error},
        title style={font=\scriptsize},
        title={CLDMD},
        zmax=0.0008,
        label style={font=\scriptsize},
        ylabel style={yshift=0.3cm,xshift=0.25cm},
        xlabel style={yshift=0.3cm,xshift=-0.25cm},
        tick label style={font=\scriptsize},
        view={-35}{25},
        enlarge y limits=0,
        enlarge x limits=0,
        enlarge z limits=0.01]
    \addplot3[surf, mesh/rows=9, shader=interp] table [] {data/DuffingCLDMDVectorFieldError.dat};
    \end{axis}
    \end{tikzpicture}
    \caption{Relative error $\frac{\Vert F_{\mu}(x) - \hat{F}_{\mu}(x)\Vert_2}{\max_{x\in [-2,2]\times[-2,2]}\Vert F_{\mu}(x)\Vert_2}$ in the estimation of the vector field $F_\mu$ of the controlled Duffing oscillator as a function of $x$, obtained using SCLDMD (left) and CLDMD (right).}
    \label{fig:vector_field_error}
\end{figure}

\subsubsection{Indirect Closed-loop Response Prediction}
The closed loop response can be predicted using either SCLDMD or CLDMD by numerically solving the initial value problems in \eqref{eq:convergent_closed_loop_model} and \eqref{eq:vector_field_reconstruction}, respectively, starting from the desired initial condition.
Fig. \ref{fig:Duffing_Indirect_Reconstruction} shows the prediction error resulting from this indirect approach, starting from $x_0 = \begin{bmatrix} 2 & -2 \end{bmatrix}^{\top}$. The results in Fig. \ref{fig:Duffing_Indirect_Reconstruction} indicate that both the CLDMD and the SCLDMD methods, when coupled with indirect prediction, accurately predict the desired closed-loop trajectory.
\begin{figure}[ht]
    \centering
    \begin{tikzpicture}    
        \begin{axis}[
            xlabel={Time [s]},
            title style={font=\scriptsize},
            title={SCLDMD},
            legend pos = south east,
            legend style={nodes={scale=0.5, transform shape}},
            enlarge y limits=0.05,
            enlarge x limits=0,
            ymax = 0.0005,
            ymin = -0.0007,
            height = 0.4\columnwidth,
            width = 0.5\columnwidth,
            label style={font=\scriptsize},
            tick label style={font=\scriptsize}
        ]
            \pgfplotsinvokeforeach{1,2}{
                \addplot+ [thick, mark=none] table [x index=0, y index=#1]{data/DuffingSCLDMDError.dat};
            }
            \legend{$x_{1}(t)-\hat{x}_1(t)$,$x_{2}(t)-\hat{x}_{2}(t)$}
        \end{axis}
    \end{tikzpicture}
    \begin{tikzpicture}
        \begin{axis}[
            xlabel={Time [s]},
            title style={font=\scriptsize},
            title={CLDMD},
            legend pos = south east,
            legend style={nodes={scale=0.5, transform shape}},
            enlarge y limits=0.05,
            enlarge x limits=0,
            ymax = 0.0005,
            ymin = -0.0007,
            height = 0.4\columnwidth,
            width = 0.5\columnwidth,
            label style={font=\scriptsize},
            tick label style={font=\scriptsize}
        ]
            \pgfplotsinvokeforeach{1,2}{
                \addplot+ [thick, mark=none] table [x index=0, y index=#1]{data/DuffingCLDMDError.dat};
            }
            \legend{$x_{1}(t)-\hat{x}_1(t)$,$x_{2}(t)-\hat{x}_{2}(t)$}
        \end{axis}
    \end{tikzpicture}
    \caption{Error between predicted and true trajectories of the controlled duffing oscillator for the experiment in Section \ref{subsec:duffing}. The figure on the left is obtained using SCLDMD indirect prediction by solving \eqref{eq:convergent_closed_loop_model} and the figure on the right is obtained using CLDMD indirect prediction by solving \eqref{eq:vector_field_reconstruction}, both using the MATLAB\textsuperscript{\textregistered} \texttt{ode45} solver.}
\label{fig:Duffing_Indirect_Reconstruction}
\end{figure}

\subsubsection{Direct Closed-loop Response Prediction}
The CLDMD method can also be used to predict the behavior of the closed-loop system starting from a given initial condition, and under the given feedback controller. Direct reconstruction is implemented using \eqref{eq:finite_spectral_reconstruction}. Fig. \ref{fig:Duffing_Direct_Reconstruction} shows the true and the predicted trajectories starting from the initial condition $x_0 = \begin{bmatrix} 2 & -2 \end{bmatrix}^{\top}$. The predicted trajectory is denoted by $\hat{x}$. The results in Fig. \ref{fig:Duffing_Direct_Reconstruction} indicate that the CLDMD method, when coupled with direct prediction, fails to obtain accurate prediction of the closed-loop trajectories.
\begin{figure}[ht]
    \centering
    \begin{tikzpicture}
        \begin{axis}[
            xlabel={Time [s]},
            legend pos = south east,
            legend style={nodes={scale=0.5, transform shape}},
            enlarge y limits=0.05,
            enlarge x limits=0,
            height = 0.4\columnwidth,
            width = 0.5\columnwidth,
            label style={font=\scriptsize},
            tick label style={font=\scriptsize}
        ]
            \pgfplotsinvokeforeach{1,2}{
                \addplot+ [thick, mark=none] table [x index=0, y index=#1]{data/DuffingCLDMDReconstructionDirect.dat};}
            \pgfplotsset{cycle list shift=2}
            \pgfplotsinvokeforeach{3,4}{
                \addplot+ [thick, dashed, mark=none] table [x index=0, y index=#1]{data/DuffingCLDMDReconstructionDirect.dat};}
             \legend{$x_{1}(t)$,$x_{2}(t)$,$\hat{x}_1(t)$,$\hat{x}_{2}(t)$}
        \end{axis}
    \end{tikzpicture}
    \begin{tikzpicture}
        \begin{axis}[
            xlabel={Time [s]},
            legend pos = south east,
            legend style={nodes={scale=0.5, transform shape}},
            enlarge y limits=0.05,
            enlarge x limits=0,
            height = 0.4\columnwidth,
            width = 0.5\columnwidth,
            label style={font=\scriptsize},
            tick label style={font=\scriptsize}
        ]
            \pgfplotsinvokeforeach{1,2}{
                \addplot+ [thick, mark=none] table [x index=0, y index=#1]{data/DuffingCLDMDErrorDirect.dat};}
            \legend{$x_{1}(t)-\hat{x}_1(t)$,$x_{2}(t)-\hat{x}_{2}(t)$}
        \end{axis}
    \end{tikzpicture}
    \caption{Predicted and true trajectories (left) and the corresponding prediction errors (right) of the controlled duffing oscillator for the experiment in Section \ref{subsec:duffing}. This result is obtained using CLDMD direct prediction \eqref{eq:finite_spectral_reconstruction} with kernel parameter  $\tilde{\rho} = 1e8$.\label{fig:Duffing_Direct_Reconstruction}}
\end{figure}

\subsection{Two-link Robot Manipulator\label{subsec:2link}}
This experiment concerns a planar two-link robot manipulator described by Euler-Lagrange dynamics
\begin{equation*}
    M(q)\ddot{q}+V_{m}(q,\dot{q})\dot{q}+F(\dot{q}) = \tau,
\end{equation*}
where $q=(q_{1}\:\:q_{2})^{\top}\in\mathbb{R}^2$ and $\dot{q}=(\dot{q}_{1}\:\:\dot{q}_{2})^{\top}$ are the angular positions ($\si{\radian}$) and angular velocities ($\si{\radian\per\second}$) of the two links, respectively, $\tau=\left(\tau_{1}\:\:\tau_{2}\right)^{\top}$ is the torque ($\si{\newton\meter}$) produced by the motors that drive the joints, $M(q)$ is the inertia matrix, and $V_{m}(q,\dot{q})$ is the centripetal-Coriolis matrix, defined as
\begin{equation*}
	M\left(q\right)\coloneqq\begin{bmatrix}
		p_{1}+2p_{3}c_{2}\left(q\right) & p_{2}+p_{3}c_{2}\left(q\right)\\
		p_{2}+p_{3}c_{2}\left(q\right) & p_{2}
	\end{bmatrix},\quad\text{and}\quad V_{m} \left(q,\dot{q}\right)=\begin{bmatrix}
		p_{3}s_{2}\left(q\right)\dot{q}_{2} & -p_{3}s_{2}\left(q\right)\left(\dot{q}_{1}+\dot{q}_{2}\right)\\
		p_{3}s_{2}\left(q\right)\dot{q}_{1} & 0
	\end{bmatrix},
\end{equation*}
where $p_{1}=\SI{3.473}{\kilogram\meter\squared}$, $p_{2}=\SI{0.196}{\kilogram\meter\squared}$, $p_{3}=\SI{0.242}{\kilogram\meter\squared}$, $c_{2}\left(q\right)=\cos(q_{2})$, $s_{2}\left(q\right)=\sin(q_{2}),$ and $F(\dot{q})=\begin{bmatrix}f_{d1}\dot{q}_{1}+f_{s1}\tanh(\dot{q}_{1}) & f_{d2}\dot{q}_{2}+f_{s2}\tanh(\dot{q}_{2})\end{bmatrix}^{\top}$ is the model for friction, where $f_{d1}=\SI{5.3}{\kilogram\meter\squared\per\second}$, $f_{d2}=\SI{1.1}{\kilogram\meter\squared\per\second}$, 
$f_{s1}=\SI{8.45}{\kilogram\meter\squared\per\second}$, and $f_{s2}=\SI{2.35}{\kilogram\meter\squared\per\second}$. The model can be expressed in the form $\dot{x} = f(x) + g(x)u$ with $x = \begin{bmatrix}
q^{\top} & \dot{q}^{\top}
\end{bmatrix}^{\top}$, $u = \tau$, $f(x) = \begin{bmatrix}
\dot{q}^{\top} & \left(M^{-1}(q)(-V_{m}(q,\dot{q})\dot{q}+F(\dot{q}))\right)^{\top}
\end{bmatrix}$, and $g(x) = \begin{bmatrix}
0_{2\times 2} & (M^{-1}(q))^{\top}
\end{bmatrix}^{\top}$, where $0_{2\times 2}$ denotes a $2\times2$ matrix of zeros.

A total of 200 open-loop trajectories of the manipulator are generated using the MATLAB\textsuperscript{\textregistered} \texttt{ode45} solver, starting from initial conditions selected to fill a hypercube of side $1$, centered at the origin of the state space, $\mathbb{R}^4$, using a Halton sequence. The control signal used for trajectory generation is of the form $u = \begin{bmatrix} u_1 & u_2 \end{bmatrix}^{\top}$ with $u_j(t) = \sum_{i=1}^{15} b_{j,i}\sin(\omega_{j,i} t + \varphi_{j,i})$, for $ j = 1,2 $, where the magnitudes $b_{j,i}$, the frequencies $\omega_{j,i}$, and the phase differences $\varphi_{j,i}$ are generated randomly from a uniform distribution on the interval $[-1,1]$. All trajectories are recorded over a duration of $1$\si{\second}, and are sampled at a frequency of $10$ \si{\hertz}.

The trajectories are then utilized to predict the behavior of the oscillator under the state feedback controller $\mu(x) = \begin{bmatrix} -5 & -5 \\ -15 & -15 \end{bmatrix}x$, starting from $x_0 = \begin{bmatrix}1&-1&1&-1\end{bmatrix}^{\top}$. CLDMD is implemented using the exponential dot product reproducing kernel with parameter $10$ and a diagonal kernel given by $ K = \mathrm{diag}\begin{bmatrix} \tilde{K}_{10} & \tilde{K}_{10} & \tilde{K}_{10} \end{bmatrix} $. SCLDMD is implemented using $\tilde{K}_r = \tilde{K}_{5} $, $ K = \mathrm{diag}\begin{bmatrix} \tilde{K}_{10} & \tilde{K}_{10} & \tilde{K}_{10} \end{bmatrix} $, and $\tilde{K}_d = \tilde{K}_{15} $. Gram matrices are regularized as described in Remark \ref{rem:rank_deficient} using regularization coefficients $\epsilon = \tilde{\epsilon} = 1e-3$. Simpson's 1/3 rule is used to compute the integrals involved in algorithms \ref{alg:SCLDMD} and \ref{alg:CLDMD}. Since the vector field is now a function of $4$ variables in each dimension, direct visualization of the true and approximate vector fields is not possible. However, the reconstruction accuracy may be indirectly gauged through indirect prediction of trajectories of the system.

Fig. \ref{fig:TwoLink_Direct_Prediction} shows the true and the predicted trajectories using the direct reconstruction method,  implemented using \eqref{eq:finite_spectral_reconstruction}. The results in Fig. \ref{fig:TwoLink_Direct_Prediction} indicate that the CLDMD method, when coupled with indirect prediction, is able to predict the desired closed-loop trajectory much better in this experiment than the Duffing oscillator experiment in Fig. \ref{fig:Duffing_Direct_Reconstruction}.
\begin{figure}
    \centering
    \begin{tikzpicture}
        \begin{axis}[
            xlabel={Time [s]},
            legend style={
                nodes={scale=0.5,
                transform shape},
                at={(0.6,0.5)},anchor=west
            },
            enlarge y limits=0.05,
            enlarge x limits=0,
            height = 0.4\columnwidth,
            width = 0.5\columnwidth,
            label style={font=\scriptsize},
            tick label style={font=\scriptsize}
        ]
            \pgfplotsinvokeforeach{1,...,4}{
                \addplot+ [thick, mark=none] table [x index=0, y index=#1]{data/2LinkCLDMDReconstructionDirect.dat};}
            \pgfplotsinvokeforeach{5,...,8}{
                \addplot+ [thick, dashed, mark=none] table [x index=0, y index=#1]{data/2LinkCLDMDReconstructionDirect.dat};}
             \legend{$x_{1}(t)$,$x_{2}(t)$,$x_{3}(t)$,$x_{4}(t)$,$\hat{x}_1(t)$,$\hat{x}_{2}(t)$,$\hat{x}_3(t)$,$\hat{x}_{4}(t)$}
        \end{axis}
    \end{tikzpicture}
    \begin{tikzpicture}
        \begin{axis}[
            xlabel={Time [s]},
            legend pos = north east,
            legend style={nodes={scale=0.5, transform shape}},
            enlarge y limits=0.05,
            enlarge x limits=0,
            height = 0.4\columnwidth,
            width = 0.5\columnwidth,
            label style={font=\scriptsize},
            tick label style={font=\scriptsize}
        ]
            \pgfplotsinvokeforeach{1,...,4}{
                \addplot+ [thick, mark=none] table [x index=0, y index=#1]{data/2LinkCLDMDErrorDirect.dat};}
            \legend{$x_{1}(t)-\hat{x}_1(t)$,$x_{2}(t)-\hat{x}_{2}(t)$,$x_{3}(t)-\hat{x}_{3}(t)$,$x_{4}(t)-\hat{x}_{4}(t)$}
        \end{axis}
    \end{tikzpicture}
    \caption{Predicted and true trajectories (left) and the corresponding prediction errors (right) of the 2-link robot manipulator for the experiment in Section \ref{subsec:2link}. This result is obtained using CLDMD direct prediction \eqref{eq:finite_spectral_reconstruction} with kernel parameter $\tilde{\rho} = 1e5$ and regularization parameter $\tilde{\epsilon}=\epsilon=1e-7$.}
    \label{fig:TwoLink_Direct_Prediction}
\end{figure}
Fig. \ref{fig:TwoLink_Indirect_prediction} shows the predicted trajectories and the prediction error resulting from the indirect approach. The results in Fig. \ref{fig:TwoLink_Indirect_prediction} indicate that both the CLDMD and the SCLDMD methods, when coupled with indirect prediction, accurately predict the desired closed-loop trajectory.
\begin{figure}
    \centering
    \begin{tikzpicture}    
        \begin{axis}[
            xlabel={Time [s]},
            legend pos = north east,
            legend style={nodes={scale=0.5, transform shape}},
            enlarge y limits=0.05,
            enlarge x limits=0,
            title style={font=\scriptsize},
            title={SCLDMD},
            ymax = 0.025,
            ymin = -0.022,
            height = 0.4\columnwidth,
            width = 0.5\columnwidth,
            label style={font=\scriptsize},
            tick label style={font=\scriptsize}
        ]
            \pgfplotsinvokeforeach{1,...,4}{
                \addplot+ [thick, mark=none] table [x index=0, y index=#1]{data/2LinkSCLDMDError.dat};
            }
            \legend{$x_{1}(t)-\hat{x}_1(t)$,$x_{2}(t)-\hat{x}_{2}(t)$,$x_{3}(t)-\hat{x}_{3}(t)$,$x_{4}(t)-\hat{x}_{4}(t)$}
        \end{axis}
    \end{tikzpicture}
    \begin{tikzpicture}
        \begin{axis}[
            xlabel={Time [s]},
            legend pos = north east,
            legend style={nodes={scale=0.5, transform shape}},
            enlarge y limits=0.05,
            title style={font=\scriptsize},
            title={CLDMD},
            enlarge x limits=0,
            ymax = 0.025,
            ymin = -0.022,
            height = 0.4\columnwidth,
            width = 0.5\columnwidth,
            label style={font=\scriptsize},
            tick label style={font=\scriptsize}
        ]
            \pgfplotsinvokeforeach{1,...,4}{
                \addplot+ [thick, mark=none] table [x index=0, y index=#1]{data/2LinkCLDMDError.dat};
            }
            \legend{$x_{1}(t)-\hat{x}_1(t)$,$x_{2}(t)-\hat{x}_{2}(t)$,$x_{3}(t)-\hat{x}_{3}(t)$,$x_{4}(t)-\hat{x}_{4}(t)$}
        \end{axis}
    \end{tikzpicture}
    \caption{Error between the predicted and the true trajectories of the 2-link manipulator for the experiment in Section \ref{subsec:2link}. The figure on the left is obtained using SCLDMD indirect prediction by solving \eqref{eq:convergent_closed_loop_model} and the figure on the right is obtained using CLDMD indirect prediction by solving \eqref{eq:vector_field_reconstruction}, both using the MATLAB\textsuperscript{\textregistered} \texttt{ode45} solver.}
    \label{fig:TwoLink_Indirect_prediction}
\end{figure}

\section{Discussion}\label{sec:discussion}
As is evident from Fig. \ref{fig:vector_field_error}, the methods developed in algorithms \ref{alg:SCLDMD} and \ref{alg:CLDMD} can effectively utilize data collected under open-loop control signals to construct the closed-loop vector field under a given feedback policy. 

Figures \ref{fig:Duffing_Direct_Reconstruction} and \ref{fig:Duffing_Indirect_Reconstruction} indicate that the prediction error for the highly nonlinear controlled Duffing oscillator is significantly higher in direct prediction as compared to indirect prediction. We postulate that this is due to the linear nature of the model in \eqref{eq:finite_spectral_reconstruction}. Since \eqref{eq:finite_spectral_reconstruction} is a solution of a system of linear ordinary differential equations, the resulting reconstruction diverges quickly from the trajectories of the nonlinear model. On the other hand, we postulate that due to the presence of the eigenfunctions in \eqref{eq:vector_field_reconstruction}, the model used in the indirect approach includes nonlinear effects and as a result, generates a better prediction. When the nonlinearities in the original system are mild, like the trigonometric nonlinearities in the two link robot model, the predictions from the direct and the indirect method are close, as seen in figures \ref{fig:TwoLink_Direct_Prediction} and \ref{fig:TwoLink_Indirect_prediction}.

The theory and the computations that support the developed algorithms require data-richness. In the convergence proofs, data-richness manifests as the density of the kernel differences, the occupation kernels, and the control occupation kernels in their respective RKHSs. In the computations, data-richness is required for the Gram matrices $G_r$ and $G_\beta$ of the occupation kernels and the control occupation kernels, respectively to be invertible. It is shown in \cite{SCC.Russo.Kamalapurkar.ea2022} that for Gaussian radial basis function reproducing kernels, the rank of the occupation kernel Gram matrix $G_r$ can be characterized using the so-called trajectory separation distance. Roughly, the trajectory separation distance is the largest radius $q$ such that when all trajectories are inflated to tubes of radius $q$, the resulting tubes are disjoint. Since multiple, shorter trajectories, starting from initial conditions that are well-separated, would generally result in better separation distances, such a dataset would be preferred. However, if the trajectories are too short, then the matrix $D$ of trajectory endpoint differences can reduce to a zero matrix, resulting in poor performance. Obtaining similar results for other reproducing kernels and for characterization of the rank of the control occupation kernel Gram matrix $G_\beta$ is a topic for future research.

The condition number of  $G_r$ and $G_\beta$ depends not only on the trajectories but also on the selected reproducing kernels. For example, the condition number of Gram matrices corresponding to Gaussian radial basis functions, given as $\tilde K(x,y) = \exp(-\frac{1}{\tilde{\rho}}\|x-y\|_2^2)$ for $\tilde{\rho} > 0$ is larger for larger $\tilde{\rho}$. However, large $\tilde{\rho}$ values correspond to faster convergence of interpolation problems within the native space of the kernel (cf. \cite{SCC.Fasshauer2007}). Data-richness conditions similar to the persistence of excitation (PE) condition in adaptive control that relate the trajectories and the kernels can potentially be formulated to ensure a well-conditioned  $G_r$ and $G_\beta$, however, such formulation is a topic for future research.

Numerical experiments indicate that while direct trajectory reconstruction can be poor for systems with severe nonlinearities, the developed techniques generate accurate estimates of the closed-loop vector field from data. Unlike traditional system identification techniques, the algorithms developed in this paper do not require careful selection of basis functions. While the implementation can be done using any universal kernels, careful tuning of the kernel parameter is often necessary.

\section{Conclusion\label{sec:Conclusion}}
In this paper, a novel operator-theoretic framework is developed for the study of controlled nonlinear systems. The framework utilizes RKHSs, where feedback-controlled nonlinear systems are expressed using a composition of infinite dimensional multiplication operator and an infinite dimensional control Liouville operator. A provably convergent finite-rank representation of the composition, that utilizes trajectories of a system, observed under open-loop control inputs $u_j$, is developed. Eigendecomposition and SVD of the finite-rank representation is utilized to predict the behavior of the system response to a query feedback controller, $\mu$. The same dataset can be used to predict the system behavior in response to a multitude of query feedback controllers. 

To the best of our knowledge, this paper, along with the conference paper \cite{SCC.Rosenfeld.Kamalapurkar2021}, are the first to study spectral decomposition of continuous-time feedback-controlled nonlinear systems in a provably convergent manner. While this paper focuses solely on system identification, the uniform convergence guarantees established by Corollary \ref{cor:uniform-convergence}, makes the developed modeling technique an attractive candidate for use in a variety of applications, including, but not limited to, data-driven control synthesis and data-driven analysis and validation of feedback controllers.
\bibliographystyle{IEEEtran}
\bibliography{scc,sccmaster,scctemp}
\end{document}